\newtheorem{theorem}{Theorem}[section]
\newtheorem{lemma}[theorem]{Lemma}
\newtheorem{proposition}[theorem]{Proposition}
\newtheorem{corollary}[theorem]{Corollary}
\theoremstyle{definition}
\theoremstyle{remark}
\newcommand{\e}{\varepsilon}
\newcommand{\Z}{\mathbb{Z}}
\begin{document}
\title[Writhes and $2k$-moves for virtual knots]{Writhes and $2k$-moves for virtual knots}

\author[Kodai Wada]{Kodai Wada}
\address{Department of Mathematics, Kobe University, 1-1 Rokkodai-cho, Nada-ku, Kobe 657-8501, Japan}
\email{wada@math.kobe-u.ac.jp}

\makeatletter
\@namedef{subjclassname@2020}{%
  \textup{2020} Mathematics Subject Classification}
\makeatother
\subjclass[2020]{Primary 57K12; Secondary 57K10}

\keywords{virtual knot, $n$-writhe, odd writhe, $2k$-move, $\Xi$-move}

\thanks{This work was supported by JSPS KAKENHI Grant Numbers JP21K20327 and  JP23K12973.}



\begin{abstract}
A $2k$-move is a local deformation adding or removing $2k$ half-twists. 
We show that if two virtual knots are related by a finite sequence of $2k$-moves, then their $n$-writhes are congruent modulo~$k$ for any nonzero integer~$n$, and their odd writhes are congruent modulo~$2k$. 
Moreover, we give a necessary and sufficient condition for two virtual knots to  have the same congruence class of odd writhes modulo~$2k$. 
\end{abstract}

\maketitle

\section{Introduction}\label{sec-intro}
Let $k$ be a positive integer. 
A \emph{$2k$-move} on a knot diagram is a local deformation adding or removing $2k$ half-twists as shown in Figure~\ref{2k-move}. 
A $2$-move is equivalent to a crossing change; that is, 
a $2$-move is realized by a crossing change, and vice verse. 
In this sense, a $2k$-move can be considered as a generalization of a crossing change. 
The $2k$-moves form an important family of local moves in classical knot theory. 
In fact, they have been well studied by means of many invariants of classical knots and links; 
for example, Alexander polynomials~\cite{Kin}, Jones, HOMFLYPT and Kauffman polynomials~\cite{Prz}, Burnside groups~\cite{DP02,DP04} and  Milnor invariants~\cite{MWY}. 

\begin{figure}[htbp]
\centering
  \begin{overpic}[width=7cm]{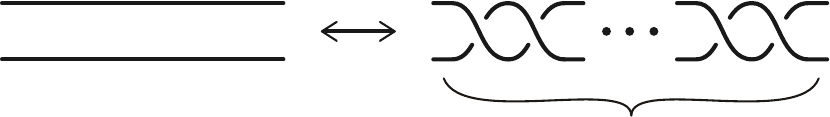}
    \put(81,25){$2k$}
    \put(119.5,-12){$2k$ half-twists}
  \end{overpic}
\vspace{1em}
\caption{A $2k$-move}
\label{2k-move}
\end{figure}

Recently, Jeong, Choi and Kim~\cite{JCK} extended the classical study of $2k$-moves to the setting of virtual knots, 
which are a generalization of classical knots discovered by Kauffman~\cite{Kau99}. 
Roughly speaking, a \emph{virtual knot} is an equivalence class of generalized knot diagrams called \emph{virtual knot diagrams} under seven types of local deformations. 
We say that two virtual knots $K$ and $K'$ are related by a $2k$-move if a diagram of $K'$ is a result of a $2k$-move on a diagram of $K$. 

For a virtual knot $K$, Kauffman~\cite{Kau04} defined an integer-valued invariant $J(K)$ called the \emph{odd writhe}. 
Satoh and Taniguchi~\cite{ST} generalized it to a sequence of integer-valued invariants $J_{n}(K)$ of $K$ called the \emph{$n$-writhes}. 
This sequence $\{J_{n}(K)\}_{n\ne0}$ gives rise to a polynomial invariant $P_{K}(t)$ of $K$ known as the the \emph{affine index polynomial}~\cite{Kau13}, 
which is essentially equivalent to the \emph{writhe polynomial}~\cite{CG}. 
Refer to~\cite{CFGMX} for a good survey of virtual knot invariants derived from chord index, including $J(K)$, $J_{n}(K)$ and $P_{K}(t)$. 

In~\cite[Theorem~2.3]{JCK}, Jeong, Choi and Kim established a necessary condition for two virtual knots to be related by a finite sequence of $2k$-moves using their affine index polynomials. 
Examining the proof of~\cite[Theorem~2.3]{JCK}, we can find another necessary condition for such a pair of virtual knots in terms of the $n$-writhes and odd writhes. 
The first aim of this paper is to prove the following. 

\begin{theorem}\label{thm-writhe}
If two virtual knots $K$ and $K'$ are related by a finite sequence of $2k$-moves, 
then the following hold: 
\begin{enumerate}
\item
$J_{n}(K)\equiv J_{n}(K')\pmod{k}$ for any nonzero integer $n$. 

\item 
$J(K)\equiv J(K')\pmod{2k}$. 
\end{enumerate}
\end{theorem}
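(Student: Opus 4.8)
The strategy is to reduce everything to a single $2k$-move and then to compute, via the Gauss diagram, how a single twist region affects the indices of the classical crossings. Since two virtual knots related by a finite sequence of $2k$-moves are connected by a chain of intermediate virtual knots each obtained from the previous one by a single $2k$-move, and since congruence modulo $k$ (resp.\ modulo $2k$) is transitive, it suffices to prove both statements when $K'$ is obtained from $K$ by a single $2k$-move; as the statements are symmetric in $K$ and $K'$, we may assume this move \emph{adds} $2k$ half-twists. Fix diagrams $D$ of $K$ and $D'$ of $K'$ agreeing outside a disk, where $D$ shows two parallel (or antiparallel) strands and $D'$ shows $2k$ consecutive half-twists; let $c_1,\dots,c_{2k}$ be the new classical crossings, ordered along the twist region. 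The move creates no virtual crossings and disturbs no other classical crossing, so the Gauss diagram of $D'$ is that of $D$ with $2k$ new chords $\gamma_1,\dots,\gamma_{2k}$ inserted in two short sub-intervals of the base circle.

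The heart of the matter is the following local computation. First, $c_1,\dots,c_{2k}$ all have the same sign $\epsilon\in\{\pm1\}$, since at every $c_j$ the two strands of the twist region meet with the same handedness and the same mutual orientation. Second, the chords $\gamma_1,\dots,\gamma_{2k}$ are pairwise unlinked (they are nested in the Gauss diagram), each $\gamma_j$ is linked with exactly the same family of chords of $D$---namely those having one endpoint in each of the two arcs cut off by the twist region---and consecutive chords $\gamma_j,\gamma_{j+1}$ carry opposite orientations. From this I would deduce: (i) the index $\operatorname{Ind}$ of every old crossing is unchanged, because the $2k$ contributions $\pm\epsilon$ coming from the new chords cancel in consecutive pairs; and (ii) the indices $\operatorname{Ind}(c_j)$ take just two values, $p$ for odd $j$ and $q$ for even $j$ (in fact $q=-p$ in the antiparallel case), and $p\equiv q\pmod 2$. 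Equivalently, in the integer labeling of the arcs used to define the affine index polynomial, the labels of $D'$ agree with those of $D$ outside the disk---along each strand through the twist region the label changes by $\pm1$ at each crossing with the over/under role alternating, so after the $2k$ crossings it returns to its initial value---while the indices of $c_1,\dots,c_{2k}$ oscillate between two integers whose difference is a difference of two labels consecutive along a strand, hence even.

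The conclusion is then read off from $J_n=\sum_{\operatorname{Ind}(c)=n}\operatorname{sgn}(c)$, the sum over classical crossings. By (i) and (ii), passing from $D$ to $D'$ adds $k\epsilon$ to the $p$-th bin and $k\epsilon$ to the $q$-th bin and leaves every other bin fixed, with the convention that a crossing of index $0$ contributes to no $J_n$. Hence $J_n(K')-J_n(K)\in\{0,\,k\epsilon,\,2k\epsilon\}$ for every nonzero $n$, giving (1). For (2), $J=\sum_{n\ \mathrm{odd}}J_n$ and $p\equiv q\pmod2$, so $J(K')-J(K)$ equals $2k\epsilon$ if $p$ (hence $q$) is odd and $0$ otherwise; either way $J(K)\equiv J(K')\pmod{2k}$. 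The main obstacle is the local Gauss-diagram bookkeeping behind (i) and (ii): one must verify with care that inserting the twist region leaves all old indices untouched and produces new crossings realizing exactly two index values that are congruent modulo $2$ and occur with equal multiplicity $k$, treating the parallel and antiparallel orientations of the two strands (which lead to the same outcome).
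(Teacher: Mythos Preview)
Your approach is essentially the same as the paper's: reduce to a single $2k$-move, pass to Gauss diagrams, observe that the $2k$ new chords all carry the same sign, that the indices of the old chords are unaffected (the $2k$ new signed endpoints on each side cancel in pairs), and that the new chords realize exactly two index values with multiplicity $k$ each. The paper packages this computation as Lemma~\ref{lem-writhe} (a restatement of \cite[Lemma~2.2]{JCK}) and obtains the sharper conclusion that the two index values are $n$ and $-n$; from this both (i) and (ii) drop out immediately, since $n$ and $-n$ always have the same parity.

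One remark on your argument for $p\equiv q\pmod 2$: the sentence ``whose difference is a difference of two labels consecutive along a strand, hence even'' is not quite right as written, since consecutive arc labels in the affine-index labelling differ by $\pm1$. The cleanest route is the paper's: simply compute that $q=-p$ (the arcs specified by $\gamma_{2i-1}$ and $\gamma_{2i}$ are complementary up to endpoints whose signed contributions cancel). Alternatively, argue parity directly: $\mathrm{ind}(\gamma)$ is congruent modulo $2$ to the number of chords linked with $\gamma$, and since $\gamma_{2i-1}$ and $\gamma_{2i}$ are linked with exactly the same set of old chords (and with none of the new ones), their indices have the same parity. Either way the gap is easily closed and your outline is correct.
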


Although the assertion~(i) in this theorem is obtained from~\cite[Theorem~2.3]{JCK} immediately, 
the author believes that it is worth stating it as a separate result. 

A \emph{$\Xi$-move} on a virtual knot diagram is a local deformation 
exchanging the positions of $c_{1}$ and $c_{3}$ of three consecutive real crossings $c_{1}$, $c_{2}$ and $c_{3}$ as shown in Figure~\ref{Xi-move}, 
where we omit the over/under information of each crossing $c_{i}$ $(i=1,2,3)$. 
The $\Xi$-move arises naturally as a diagrammatic characterization of virtual knots having the same odd writhe. 
In fact, Satoh and Taniguchi~\cite{ST} proved the following. 

\begin{figure}[htbp]
\centering
  \begin{overpic}[width=6cm]{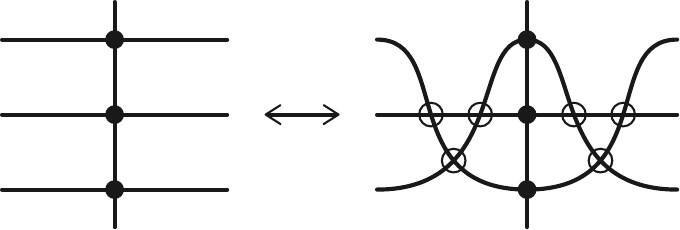}
    \put(72.75,33){$\Xi$}
    \put(17,40){$c_{1}$}
    \put(17,21){$c_{2}$}
    \put(17,2){$c_{3}$}
    \put(121.5,52){$c_{3}$}
    \put(121.5,20){$c_{2}$}
    \put(121.5,2){$c_{1}$}
  \end{overpic}
\caption{A $\Xi$-move}
\label{Xi-move}
\end{figure}

\begin{theorem}[{\cite[Theorem 1.7]{ST}}]\label{thm-ST}
For two virtual knots $K$ and $K'$, the following are equivalent: 
\begin{enumerate}
\item
$J(K)=J(K')$. 
\item
$K$ and $K'$ are related by a finite sequence of $\Xi$-moves.
\end{enumerate}
\end{theorem}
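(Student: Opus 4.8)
The plan is to run everything on Gauss diagrams, translating both the $\Xi$-move and the Reidemeister moves into combinatorial operations on an oriented circle with signed, oriented chords. First I would reinterpret a $\Xi$-move in this language. Three consecutive real crossings $c_1,c_2,c_3$ along the knot correspond to three consecutive chord endpoints on the Gauss circle, one endpoint from each of three chords $\gamma_1,\gamma_2,\gamma_3$ (the generic case; the degenerate ones are similar or easier), and the $\Xi$-move reverses their cyclic order to $\gamma_3,\gamma_2,\gamma_1$. Realizing that reversal as three successive swaps of adjacent endpoints --- swap the first two, then the last two, then the first two again --- and using that a single swap of two adjacent endpoints belonging to distinct chords $\gamma,\gamma'$ toggles exactly whether $\gamma$ and $\gamma'$ interlace and affects nothing else, one checks that a $\Xi$-move toggles all three interlacements inside the triple $\{\gamma_1,\gamma_2,\gamma_3\}$ and leaves every sign and every other interlacement untouched. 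Hence each of $\gamma_1,\gamma_2,\gamma_3$ has its interlacement degree changed by $0$ or $\pm2$, so the parity of the interlacement degree of every chord is preserved; since a crossing is odd precisely when its chord interlaces an odd number of others, the signed set of odd crossings, and therefore $J$, is unchanged. This proves $(2)\Rightarrow(1)$.

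For $(1)\Rightarrow(2)$ I would build a normal form. The handshake lemma for the interlacement graph (vertices the chords, edges the interlacing pairs) shows $J(K)$ is always even, so for each $m\in\Z$ let $T_m$ be the virtual knot whose Gauss diagram consists of $|m|$ ``isolated interlaced pairs'' placed consecutively --- each pair two mutually interlacing chords interlacing nothing else, signed so that the pair contributes $2\,\mathrm{sign}(m)$ to $J$ --- with $T_0$ the unknot. Then $J(T_m)=2m$, and the theorem reduces to showing that every virtual knot $K$ is $\Xi$-equivalent to $T_{J(K)/2}$, since then $K\sim T_{J(K)/2}=T_{J(K')/2}\sim K'$. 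I would prove this reduction by induction on the number of chords, with the elementary steps being a first Reidemeister move (delete an isolated chord) and a second Reidemeister move (delete a parallel, oppositely signed pair interlacing nothing else). The substance is to use $\Xi$-moves to expose such a configuration: triangle flips act efficiently on the interlacement graph --- flipping the triangle on $\{\gamma_1,\gamma_2,\gamma_3\}$ sends a path $\gamma_1-\gamma_2-\gamma_3$ to the single edge $\gamma_1-\gamma_3$, a triangle to three isolated vertices, a claw to two disjoint edges, and so on --- so iterated flips should drive the interlacement graph toward a matching, after which an interlaced pair can be un-interlaced by one further flip against an auxiliary chord and then cancelled, and any ``even'' chord (one interlacing a positive even number of chords, hence contributing nothing to $J$) can be stripped of its interlacements two at a time and removed.

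The main obstacle is exactly that a $\Xi$-move may only be applied to three chords whose endpoints are currently consecutive on the circle, while Reidemeister moves do not allow arbitrary reordering of endpoints; so one must argue that a productive triangle flip or a cancellation can always be maneuvered into position, and that the whole process terminates. I expect the efficient route is to isolate a short list of ``model'' modifications of Gauss diagrams, each realized by an explicit finite combination of $\Xi$- and Reidemeister moves (in particular third Reidemeister moves to reshuffle which triples of endpoints are consecutive), and then to run the induction against a complexity measure --- say the number of chords, ties broken by the total number of interlacements --- that some model move is guaranteed to strictly decrease whenever the diagram is not yet a $T_m$. Checking that this list of model moves is complete, and disposing of the small degenerate cases by hand, is where the real work lies.
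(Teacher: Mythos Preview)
This theorem is not proved in the present paper; it is quoted from Satoh--Taniguchi~\cite{ST} and used as a black box. The only trace of their method visible here is the appeal to \cite[Proposition~7.2]{ST} in the proof of Proposition~\ref{prop-normalform}: every Gauss diagram is $\Xi$-equivalent (through Reidemeister moves) to one of the standard diagrams $G(a)$ of Figure~\ref{normalform}. Your normal forms $T_m$ are precisely these $G(a)$ --- a shell-pair is exactly an isolated interlaced pair of chords with a common sign --- so your overall strategy for $(1)\Rightarrow(2)$ is the same reduction that \cite{ST} carries out.

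Your argument for $(2)\Rightarrow(1)$ is correct: a chord has odd index if and only if it interlaces an odd number of other chords, and your three-transposition decomposition shows a $\Xi$-move toggles all three interlacements inside the triple, changing each interlacement degree by an even amount. For $(1)\Rightarrow(2)$, however, you have a plan rather than a proof, and you say so yourself. The obstacle you name --- that a $\Xi$-move requires three \emph{consecutive} endpoints on the circle, while the interlacement-graph heuristics (triangles to isolated vertices, paths to edges) presuppose you can flip any triple --- is exactly the content that has to be supplied. You have not produced the finite list of model moves, nor the complexity measure, nor the verification that some model move is always available and strictly decreases it when the diagram is not yet a $G(a)$. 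Until those are written down, this direction is an outline of the Satoh--Taniguchi argument, not an independent proof.
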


Inspired by this theorem, we use $\Xi$-moves together with $2k$-moves to characterize virtual knots having the same congruence class of odd writhes modulo~$2k$. 
The second aim of this paper is to prove the following. 

\begin{theorem}\label{thm-2kXi}
For two virtual knots $K$ and $K'$, the following are equivalent: 
\begin{enumerate}
\item
$J(K)\equiv J(K')\pmod{2k}$. 
\item
$K$ and $K'$ are related by a finite sequence of $2k$-moves and $\Xi$-moves.
\end{enumerate}
\end{theorem}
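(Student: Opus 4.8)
The plan is to prove the two implications separately, with the forward direction being essentially a packaging of Theorems~\ref{thm-writhe} and~\ref{thm-ST}, and the backward direction requiring a genuinely new construction.

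For the implication (ii)$\Rightarrow$(i), suppose $K$ and $K'$ are related by a finite sequence of $2k$-moves and $\Xi$-moves. I would insert an intermediate diagram: let $K''$ be obtained from $K$ by performing only the $2k$-moves in the sequence (in order), so that $K'$ and $K''$ are related by $\Xi$-moves alone. Actually, more carefully, since $2k$-moves and $\Xi$-moves may be interleaved, I would instead argue by tracking the invariant along the sequence: each $\Xi$-move preserves $J$ exactly (this is the content of Theorem~\ref{thm-ST}, (ii)$\Rightarrow$(i) with trivial sequences, or can be checked directly from the definition of the odd writhe), and each $2k$-move changes $J$ by a multiple of $2k$ (this is Theorem~\ref{thm-writhe}(ii) applied to a single $2k$-move). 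Summing the contributions along the sequence gives $J(K)\equiv J(K')\pmod{2k}$. This direction is short.

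For the implication (i)$\Rightarrow$(ii), the strategy is to reduce to the known case $J(K)=J(K')$ via a normalization. Suppose $J(K)\equiv J(K')\pmod{2k}$, say $J(K')-J(K)=2km$ for some integer $m$. It suffices to exhibit a finite sequence of $2k$-moves transforming $K$ into some virtual knot $\widetilde{K}$ with $J(\widetilde{K})=J(K')$; then Theorem~\ref{thm-ST} supplies a finite sequence of $\Xi$-moves from $\widetilde{K}$ to $K'$, and concatenation finishes the proof. So the crux is: a single $2k$-move can be performed on \emph{any} virtual knot diagram in a way that changes the odd writhe by exactly $+2k$ (or $-2k$), and iterating $|m|$ times achieves the desired shift. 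To see that such a move exists, I would take a local picture in which two strands of the diagram run parallel with no other crossings between them — e.g.\ insert a small trivial ``kink'' region, or use a portion of an arc together with a nearby parallel push-off created by Reidemeister~II moves (which do not affect $J$) — and then apply the $2k$-move there. One then computes directly, using the formula for $J$ in terms of the index/chord data of the crossings, that the $2k$ new real crossings contribute a net $\pm 2k$ to the odd writhe (each new crossing of the appropriate sign at an odd chord contributes $\pm1$, and the $2k$ crossings created by the twist region all have the same index parity and sign). Care is needed to choose the location so that the new crossings are genuinely odd (have odd index); inserting the twist in a region where the ambient arc already separates the diagram into the right parity classes, or adjusting by a Reidemeister~I kink beforehand, handles this.

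The main obstacle I anticipate is precisely this last point: ensuring that the $2k$-move can always be placed so that it changes $J$ by the \emph{full} amount $\pm 2k$ rather than some smaller (even) multiple, regardless of the starting diagram. The index of each new crossing depends on a mod-everything count of arcs along the diagram, so one must argue that by a preliminary sequence of $\Xi$-moves or Reidemeister~I/II moves (which cost nothing, since Reidemeister moves are built into virtual equivalence and $\Xi$-moves are allowed in~(ii)) one can always arrange a twist region whose $2k$ crossings are all odd and of the same sign. I would likely handle this by first using Theorem~\ref{thm-ST} to replace $K$ by a diagram of a convenient standard form — for instance a diagram obtained from the long-knot picture realizing a prescribed odd writhe — for which the existence of a suitable twist region is transparent, and then doing the $2k$-move count there. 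Once the normalization diagram is fixed the computation of the odd-writhe change is a short index bookkeeping, not a serious difficulty.
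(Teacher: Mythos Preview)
Your (ii)$\Rightarrow$(i) is exactly the paper's argument. For (i)$\Rightarrow$(ii) you take a genuinely different route. The paper proves a normal form: using shell-pair manipulations on Gauss diagrams (Lemmas~\ref{lem-shellpair} and~\ref{lem-censecutive}) it shows that every virtual knot is $(2k,\Xi)$-equivalent to some $K(a)$ with $0\le a<k$ (Proposition~\ref{prop-normalform}), whence two knots with $J\equiv J'\pmod{2k}$ land on the same normal form. Your approach instead realizes the defect $J(K')-J(K)=2km$ directly by $|m|$ well-chosen $2k$-moves and then invokes Theorem~\ref{thm-ST}. This is shorter and sidesteps the shell-pair machinery entirely; the paper's route costs that machinery but yields the complete representative system recorded as a corollary.

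The obstacle you single out---placing a $2k$-move on an arbitrary diagram so that its new chords have \emph{odd} index, hence shift $J$ by the full $\pm2k$---is the only genuine issue, and your R-I/R-II suggestions as stated do not quite settle it (an R-I kink produces an index-$0$ chord, and an R-II pair does not by itself create a twist region of the right parity). But the paper itself supplies the missing construction in Section~\ref{sec-distance}: the diagram of Figure~\ref{pf-prop-distance} with $a=1$ is, for \emph{any} $K$, obtained from a diagram of $K$ by a single $2k$-move whose new chords all have index $\pm1$, so $J$ shifts by exactly $2k$. With that explicit placement your argument closes.
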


Although the crossing change is an unknotting operation for classical knots, 
it was shown by Carter, Kamada and Saito in~\cite[Proposition~25]{CKS} that not every virtual knot can be unknotted by crossing changes. 
This fact justifies the notion of flat virtual knots. 
A \emph{flat virtual knot}~\cite{Kau99} is an equivalence class of virtual knots up to crossing changes. 
Equivalently, a flat virtual knot is represented by a virtual knot diagram with all the real crossings replaced by flat crossings,  
where a \emph{flat crossing} is a transverse double point with no over/under information. 

In~\cite[Lemma~2.2]{Che}, Cheng showed that 
the odd writhe for any virtual knot takes values in even integers. 
Therefore any virtual knot $K$ and the trivial one $O$ satisfy $J(K)\equiv J(O)\equiv0\pmod{2}$. 
By Theorem~\ref{thm-2kXi} for $k=1$, the two knots $K$ and $O$ are related by a finite sequence of crossing changes and $\Xi$-moves. 
In other words, we have the following. 

\begin{corollary}
Any flat virtual knot can be deformed into the trivial knot by a finite sequence of flat $\Xi$-moves; 
that is, the flat $\Xi$-move is an unknotting operation for flat virtual knots. 
Here, a flat $\Xi$-move is a $\Xi$-move with all the real crossings replaced by flat ones. 
\qed
\end{corollary}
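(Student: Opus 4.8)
The plan is to derive the corollary directly from Theorem~\ref{thm-2kXi} in the case $k=1$, combined with Cheng's parity constraint on the odd writhe. Recall from the introduction that a $2$-move is the same as a crossing change, so Theorem~\ref{thm-2kXi} specializes to the statement that two virtual knots $K$ and $K'$ satisfy $J(K)\equiv J(K')\pmod 2$ if and only if they are related by a finite sequence of crossing changes and $\Xi$-moves.

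First I would invoke \cite[Lemma~2.2]{Che} to record that $J(K)$ is even for every virtual knot $K$; in particular $J(K)\equiv 0\equiv J(O)\pmod 2$, where $O$ is the trivial knot. The specialized Theorem~\ref{thm-2kXi} then produces, for any virtual knot $K$, a finite sequence of crossing changes and $\Xi$-moves carrying a diagram of $K$ to a diagram of $O$.

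The remaining step is to transport this sequence to the flat category. Given a flat virtual knot, I would pick a virtual knot $K$ representing it and fix such a sequence from $K$ to $O$. Replacing every real crossing in every diagram of the sequence by a flat crossing turns each crossing change into the identity move (a flat crossing carries no over/under data) and each $\Xi$-move into a flat $\Xi$-move, while the two endpoints become the given flat virtual knot and the trivial flat knot. Hence the flat $\Xi$-move suffices to unknot any flat virtual knot, which is the assertion.

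The only point requiring care — and it is a matter of unwinding definitions rather than a genuine obstacle — is checking that this ``flattening'' operation is compatible with the moves in play: that a $\Xi$-move between virtual knot diagrams descends to a flat $\Xi$-move, and that the generalized Reidemeister moves underlying virtual equivalence descend to their flat counterparts. Once this routine bookkeeping is in place, no further computation is needed.
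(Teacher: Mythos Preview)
Your proposal is correct and follows essentially the same route as the paper: invoke Cheng's parity result to get $J(K)\equiv J(O)\pmod 2$, apply Theorem~\ref{thm-2kXi} with $k=1$ (so that $2k$-moves are crossing changes), and then pass to the flat category where crossing changes become trivial and $\Xi$-moves become flat $\Xi$-moves. The only difference is that you spell out the flattening step a bit more carefully than the paper, which leaves it implicit.
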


The rest of this paper is organized as follows. 
In Section~\ref{sec-thm1}, we review the definitions of a virtual knot, a Gauss diagram, the $n$-writhe and the odd writhe, and prove Theorem~\ref{thm-writhe}. 
Section~\ref{sec-thm2} is devoted to the proof of Theorem~\ref{thm-2kXi}. 
Our main tool is the notion of shell-pairs, which are certain pairs of chords of a Gauss diagram introduced in~\cite{MSW}. 
In the last section, for two virtual knots $K$ and $K'$ that are related by a finite number of $2k$-moves, we study their $2k$-move distance $\mathrm{d}_{2k}(K,K')$ defined as the minimal number of such $2k$-moves. 
We show that for any virtual knot $K$ and any positive integer $a$, there is a virtual knot $K'$ such that $\mathrm{d}_{2k}(K,K')=a$ (Proposition~\ref{prop-distance}).

\section{Proof of Theorem~\ref{thm-writhe}}\label{sec-thm1}
We begin this section by recalling the definitions of virtual knots and Gauss diagrams from~\cite{GPV,Kau99}. 

A \emph{virtual knot diagram} is the image of an immersion of an oriented circle into the plane 
whose singularities are only transverse double points. 
Such double points consist of \emph{positive}, \emph{negative} and \emph{virtual crossings} as shown in Figure~\ref{xing}. 
A positive/negative crossing is also called a \emph{real crossing}. 

\begin{figure}[htbp]
\centering
  \begin{overpic}[width=6cm]{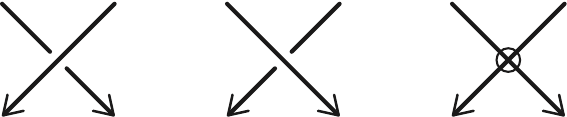}
    \put(0.8,-12){positive}
    \put(67.25,-12){negative}
    \put(139,-12){virtual}
  \end{overpic}
\vspace{1em}
\caption{Types of double points}
\label{xing}
\end{figure}

Two virtual knot diagrams are said to be \emph{equivalent} if they are related by a finite sequence of \emph{generalized Reidemeister moves} I--VII as shown in Figure~\ref{gReid-move}. 
A \emph{virtual knot} is the equivalence class of a virtual knot diagram. 
In particular, a classical knot can be considered as a virtual knot diagram with no virtual crossings, called a \emph{classical knot diagram}, up to the moves I, II and III. 
In~\cite[Theorem~1.B]{GPV}, Goussarov, Polyak and Viro proved that two equivalent classical knot diagrams are related by a finite sequence of moves I, II, and III; 
in other words, the set of virtual knots contains that of classical knots.  
In this sense, virtual knots are a generalization of classical knots. 

\begin{figure}[htbp]
\centering
  \begin{overpic}[width=12cm]{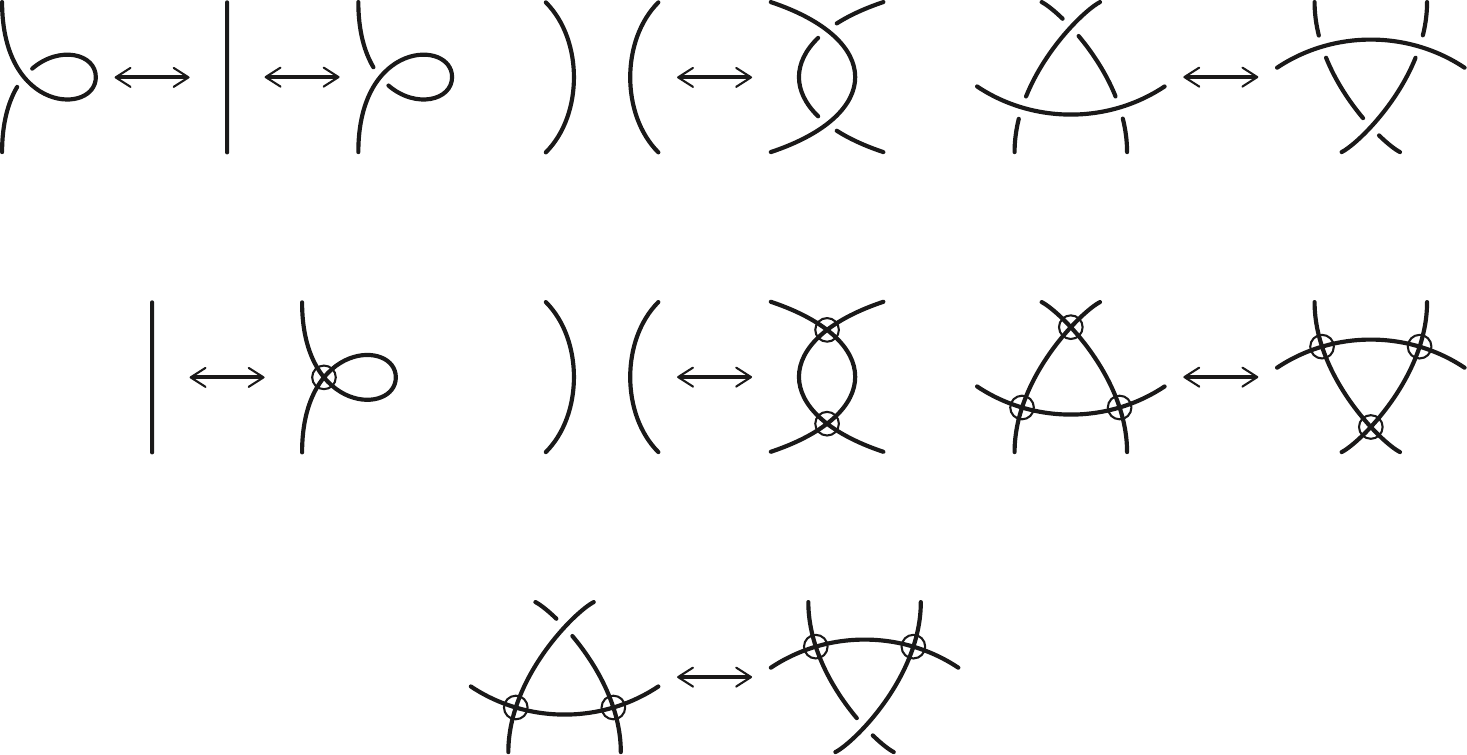}
    \put(33.5,162){I}
    \put(68.5,162){I} 
    \put(162.5,162){II}
    \put(278.5,162){III}
    \put(48,92){IV} 
    \put(162.5,92){V}
    \put(278.5,92){VI}
    \put(158.5,23){VII}
  \end{overpic}
\caption{Generalized Reidemeister moves I--VII}
\label{gReid-move}
\end{figure}

A \emph{Gauss diagram} is an oriented circle equipped with a finite number of signed and oriented chords whose endpoints lie disjointly on the circle. 
In figures the underlying circle and chords of a Gauss diagram will be drawn with thick and thin lines, respectively. 
Gauss diagrams provide an alternative way of representing virtual knots. 
For a virtual knot diagram $D$ with $n$ real crossings (and some or no virtual crossings), 
the \emph{Gauss diagram $G_{D}$ associated with $D$} is constructed as follows. 
It consists of a circle and $n$ chords connecting the preimage of each real crossing of $D$. 
Each chord of $G_{D}$ has the sign of the corresponding real crossing of $D$, 
and it is oriented from the overcrossing to the undercrossing. 
For a virtual knot $K$, a \emph{Gauss diagram of $K$} is defined to be a Gauss diagram associated with a virtual knot diagram of $K$. 

A motivation of introducing virtual knot theory comes from the realization of Gauss diagrams. 
In fact, the construction above defines a surjective map from virtual knot diagrams onto Gauss diagrams, 
although not every Gauss diagram can be realized by a classical knot diagram.  
Moreover, 
this map induces a bijection between the set of virtual knots and that of Gauss diagrams modulo Reidemeister moves~I, II and III defined in the Gauss diagram level as shown in Figure~\ref{Reid-Gauss}~\cite[Theorem~1.A]{GPV}. 
Refer also to~\cite[Section 3.2]{Kau99}. 
Therefore a virtual knot can be regarded as the equivalence class of a Gauss diagram. 

\begin{figure}[htbp]
\centering
  \begin{overpic}[width=12cm]{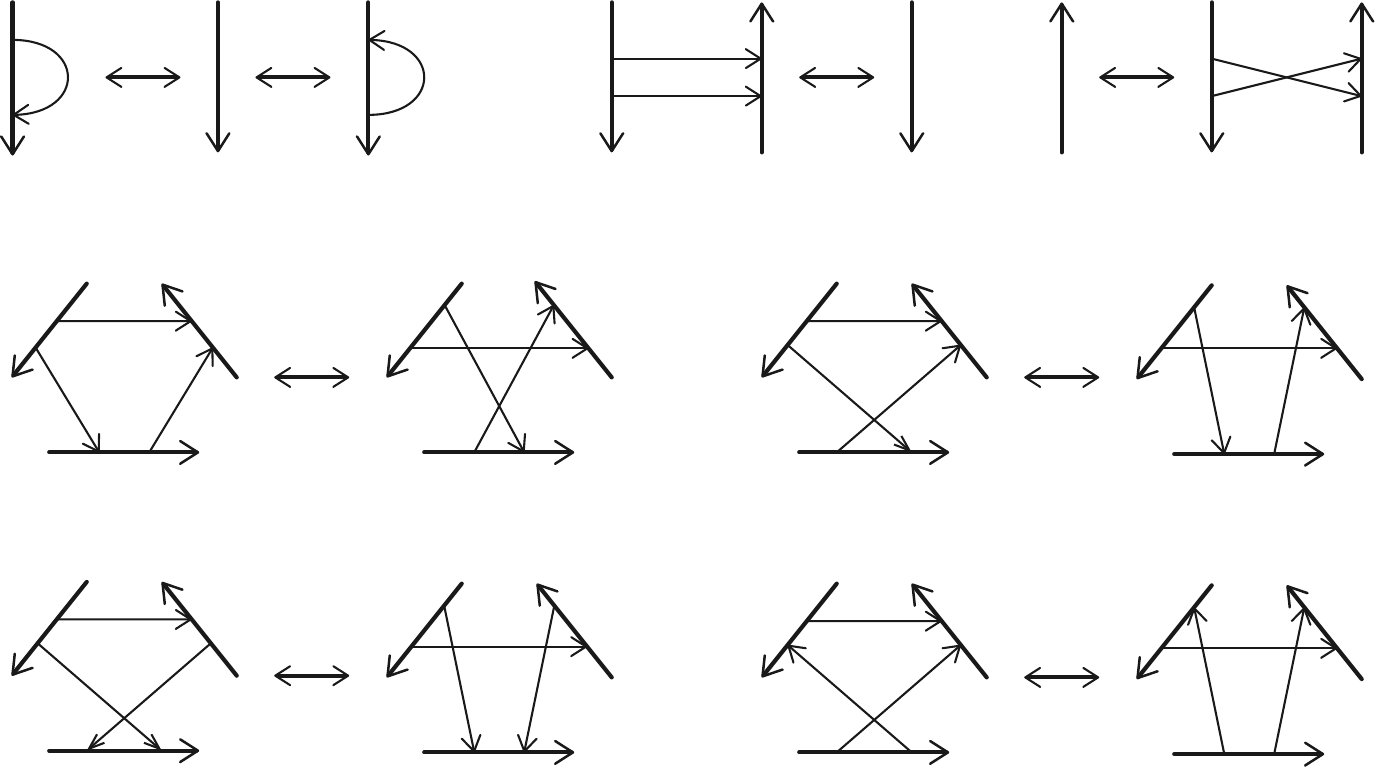}
    \put(34,175){I}
    \put(17,161){$\e$}
    \put(71,175){I}
    \put(105.5,161){$\e$}
    \put(204,175){II}
    \put(169,178){$\e$}
    \put(161.5,159){$-\e$}
    \put(278.5,175){II}
    \put(309,178){$\e$}
    \put(303.5,159){$-\e$}
    \put(71.8,101){III} 
    \put(29.5,113.5){$\e$}
    \put(1,86){$-\e$}
    \put(47,86){$-\e$}
    \put(122.5,107){$\e$}
    \put(103,93.5){$-\e$}
    \put(131,93.5){$-\e$}
    \put(257.9,101){III}
    \put(215,113.5){$\e$}
    \put(200,88){$\e$}
    \put(229,88){$\e$}
    \put(309,107){$\e$}
    \put(293,88){$\e$}
    \put(322.5,88){$\e$}
    \put(71.8,26.5){III}
    \put(29.5,39.5){$\e$}
    \put(14,13){$\e$}
    \put(41,13){$-\e$}
    \put(122.5,33){$\e$}
    \put(107,13){$\e$}
    \put(135,13){$-\e$}
    \put(257.9,26.5){III}
    \put(215,39.5){$\e$}
    \put(193,13){$-\e$}
    \put(229,13){$\e$}
    \put(309,33){$\e$}
    \put(286,13){$-\e$}
    \put(322.5,13){$\e$}
  \end{overpic}
\caption{Reidemeister moves I, II and III on Gauss diagrams ($\e=\pm1$)}
\label{Reid-Gauss}
\end{figure}

We will use two local deformations on Gauss diagrams as shown in Figure~\ref{2kXi-Gauss} as well as the Reidemeister moves I, II and III. 
These deformations are the counterparts of a $2k$-move and a $\Xi$-move for Gauss diagrams. 
More precisely, a \emph{$2k$-move on a Gauss diagram} adds or removes $2k$ chords with the same sign $\e$ whose initial and terminal endpoints appear alternatively. 
Let $P_{1}$, $P_{2}$ and $P_{3}$ be three consecutive endpoints of chords of a Gauss diagram. 
A \emph{$\Xi$-move} exchanges the positions of $P_{1}$ and $P_{3}$ with preserving the signs $\e_{1},\e_{2},\e_{3}$ and orientations of the chords. 
In the right of the figure, a pair of dots $\bullet$ marks the two endpoints $P_{1}$ and $P_{3}$ exchanged by a $\Xi$-move.

\begin{figure}[htbp]
\centering
  \begin{overpic}[width=12cm]{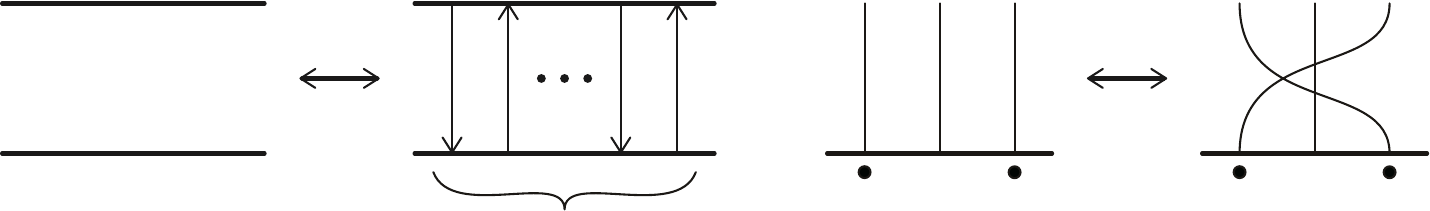}
    \put(76,36){$2k$}
    \put(114,-12){$2k$ chords}
    \put(100,29.5){$\e$}
    \put(114,29.5){$\e$}
    \put(151,29.5){$\e$}
    \put(165,29.5){$\e$}
    \put(266,36){$\Xi$}
    \put(196,41){$\e_{1}$}
    \put(214,41){$\e_{2}$}
    \put(232,41){$\e_{3}$}
    \put(287,41){$\e_{1}$}
    \put(303.5,41){$\e_{2}$}
    \put(333,41){$\e_{3}$}
    \put(201.5,-5){$P_{1}$}
    \put(219.5,-5){$P_{2}$}
    \put(237.5,-5){$P_{3}$}
    \put(290.5,-5){$P_{3}$}
    \put(308.5,-5){$P_{2}$}
    \put(326.5,-5){$P_{1}$}

  \end{overpic}
\vspace{1em}
\caption{A $2k$-move and a $\Xi$-move on Gauss diagrams}
\label{2kXi-Gauss}
\end{figure}

Using Gauss diagrams, we now define the $n$-writhe and the odd writhe of a virtual knot $K$. 
For a Gauss diagram $G$ of $K$, let $\gamma$ be a chord of $G$. 
If $\gamma$ has a sign~$\e$, then we assign $\e$ and $-\e$ to the terminal and initial endpoints of $\gamma$, respectively. 
The endpoints of $\gamma$ divides the underlying circle of $G$ into two arcs. 
Let $\alpha$ be one of the two oriented arcs that starts at the initial endpoint of $\gamma$. 
See Figure~\ref{arc}. 
The \emph{index} of $\gamma$, $\mathrm{ind}(\gamma)$, is the sum of sings of endpoints of chords on $\alpha$. 
For an integer~$n$, we denote by $J_{n}(G)$ the sum of signs of chords with index $n$. 
In~\cite[Lemma~2.3]{ST}, Satoh and Taniguchi proved that $J_{n}(G)$ is an invariant of the virtual knot $K$ for any $n\ne0$; 
that is, it is independent of the choice of $G$. 
This invariant is called the \emph{$n$-writhe} of $K$ and denoted by $J_{n}(K)$. 
The \emph{odd writhe $J(K)$} of $K$, defined by Kauffman~\cite{Kau04}, is given by 
\[
J(K)=\sum_{n\in\Z}J_{2n-1}(K). 
\] 
Refer to~\cite{CFGMX,Kau04,ST} for more details.  

\begin{figure}[t]
\vspace{1em}
\centering
  \begin{overpic}[width=2cm]{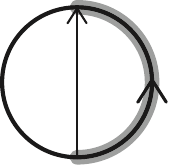}
    \put(15,26){$\gamma$}
    \put(30,26){$\e$}
    \put(60,26){$\alpha$}
    \put(24,60){$\e$}
    \put(20,-10){$-\e$}
  \end{overpic}
\vspace{1em}
\caption{A chord $\gamma$ with sign $\e$ and its specified arc $\alpha$}
\label{arc}
\end{figure}

In~\cite{JCK}, Jeong, Choi and Kim prepared Lemma~2.2 to show Theorem~2.3 giving the necessary condition mentioned in Section~\ref{sec-intro}. 
We rephrase and reprove this lemma from the Gauss diagram point of view for the proof of Theorem~\ref{thm-writhe}.

\begin{lemma}[{cf. \cite[Lemma~2.2]{JCK}}]\label{lem-writhe}
If two Gauss diagrams $G$ and $G'$ are related by a single $2k$-move, 
then there is a unique integer $n$ such that 
\[
J_{n}(G)-J_{n}(G')=\e k,\ 
J_{-n}(G)-J_{-n}(G')=\e k \text{ and } 
J_{m}(G)=J_{m}(G') 
\]
for some $\e=\pm1$ and any integer $m\neq \pm n$. 
\end{lemma}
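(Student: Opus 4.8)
The plan is to track exactly what happens to all chord indices when we perform a single $2k$-move, using the combinatorial description of a $2k$-move on a Gauss diagram. Suppose $G'$ is obtained from $G$ by inserting $2k$ chords $\gamma_1,\dots,\gamma_{2k}$, all of the same sign $\e$, whose $4k$ endpoints are arranged so that initial and terminal endpoints alternate along a short arc of the circle (with no other endpoints interspersed). First I would set up coordinates: label the $4k$ consecutive endpoints along the circle and record, for the $i$-th chord, which endpoints of the other $2k-1$ new chords lie on its specified arc $\alpha_i$. Because the pattern is "alternating initial/terminal", the signed contributions of the other new endpoints to $\mathrm{ind}(\gamma_i)$ telescope: the new endpoints on $\alpha_i$ consist of some number of terminal endpoints (each contributing $\e$) and initial endpoints (each contributing $-\e$) that cancel in pairs except for one leftover, so $\mathrm{ind}(\gamma_i)$ takes one of two values $n$ and $-n$ (for a single integer $n$ depending only on where the block of new endpoints sits relative to the rest of $G$), with exactly $k$ of the new chords having index $n$ and $k$ having index $-n$.

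Next I would check that the insertion does not change the index of any old chord. An old chord $\delta$ has the block of $4k$ new endpoints lying entirely on one of its two arcs; if the block lies on the specified arc $\alpha_\delta$, it contributes $2k\cdot\e + 2k\cdot(-\e)=0$ to $\mathrm{ind}(\delta)$ since there are $2k$ new terminal endpoints and $2k$ new initial endpoints; if it lies on the complementary arc, it contributes nothing at all. Either way $\mathrm{ind}_{G'}(\delta)=\mathrm{ind}_G(\delta)$. Combining the two computations: for $m\ne \pm n$, the chords of $G$ and $G'$ with index $m$ coincide, so $J_m(G)=J_m(G')$; for index $n$ (resp. $-n$) the only difference between $G$ and $G'$ is the $k$ new chords of sign $\e$ and index $n$ (resp. $-n$), giving $J_n(G')-J_n(G)=\e k$ and $J_{-n}(G')-J_{-n}(G)=\e k$, which is the claimed identity after swapping the roles of $G,G'$ (absorbing the sign into the $\pm1$ choice of $\e$). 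Uniqueness of $n$ is then automatic: $n$ is characterized as the unique positive integer (or, if $n=0$, a degenerate case to handle separately) at which $J_n$ fails to be preserved.

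The main obstacle I expect is bookkeeping the case $n=0$ and the sign conventions carefully. When the block of new endpoints is positioned so that the "leftover" endpoint computation yields $0$, all $2k$ new chords have index $0$; then $J_0$ is not an invariant and the statement as phrased (with "unique integer $n$" and the pair $\pm n$) needs the reading $n=0$, $J_0(G)-J_0(G')=2\e k$ — I would either argue this case cannot produce a genuine discrepancy in the invariants $J_m$ for $m\ne 0$ (so the conclusion holds vacuously for nonzero indices with, say, $n$ taken to be any value) or, more cleanly, note that one may always choose the specified arcs consistently so that the computation goes through; in any event the delicate point is matching the $\e=\pm1$ in the statement to the sign of the inserted chords together with the orientation choice. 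A secondary but routine point is verifying the telescoping claim for $\mathrm{ind}(\gamma_i)$ by an explicit small-scale check (e.g.\ drawing the $4k$ endpoints in order and reading off $\alpha_i$), which I would present as a short lemma-internal computation rather than a separate argument.
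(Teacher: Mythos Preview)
Your approach matches the paper's: verify that old chords keep their indices, then compute that the $2k$ new chords have indices $n,-n,n,-n,\ldots$ (so $k$ of each), all with sign $\e$. One point needs correcting, however. A $2k$-move on a Gauss diagram inserts chords whose endpoints occupy \emph{two} disjoint arcs of the circle ($2k$ endpoints on each arc, corresponding to the two strands of the tangle), not a single arc carrying $4k$ consecutive points. Consequently, for an old chord $\delta$, the specified arc $\alpha_\delta$ may contain one block of $2k$ new endpoints but not the other---a case your argument as written does not cover. The fix is immediate (and is exactly what the paper does): within each block of $2k$ consecutive endpoints the assigned signs alternate $\e,-\e,\e,-\e,\ldots$ and hence already sum to zero, so $\mathrm{ind}(\delta)$ is unchanged regardless of how the two blocks distribute across the arcs of $\delta$. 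With this correction your argument is complete and essentially identical to the paper's.
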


\begin{proof}
We may assume that $G'$ is obtained from $G$ by removing $2k$ chords $\gamma_{1}$, $\gamma_{2},\dots,\gamma_{2k}$ with sing $\e$ involved in a $2k$-move as shown in Figure~\ref{2k-Gauss}, 
where we depict the signs of the initial and terminal endpoints of each $\gamma_{i}$ $(i=1,2,\ldots,2k)$, instead of omitting the sign of $\gamma_{i}$ itself. 

\begin{figure}[htbp]
\vspace{1em}
\centering
  \begin{overpic}[width=7cm]{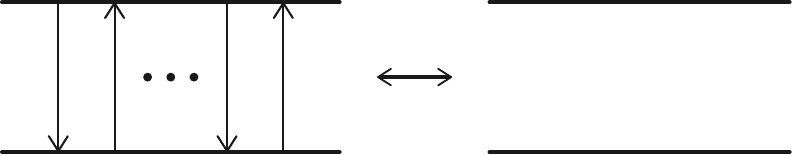}
    \put(99,24){$2k$}
    \put(39,-23){$G$}
    \put(157,-23){$G'$}
    \put(3,18){$\gamma_{1}$}
    \put(17.5,18){$\gamma_{2}$}
    \put(73,18){$\gamma_{2k}$}
    \put(7,42){$-\e$}
    \put(27,42){$\e$}
    \put(49,42){$-\e$}
    \put(69,42){$\e$}
    \put(12,-9){$\e$}
    \put(21,-9){$-\e$}
    \put(54.5,-9){$\e$}
    \put(63.5,-9){$-\e$}
  \end{overpic}
\vspace{2em}
\caption{$G'$ is a result of a $2k$-move on $G$ removing $2k$ chords}
\label{2k-Gauss}
\end{figure}

For any chord $\gamma$ of $G$ other than the $2k$ chords $\gamma_{1},\ldots,\gamma_{2k}$, 
let $\gamma'$ be the corresponding chord of $G'$. 
Then $\gamma$ and $\gamma'$ have the same index; that is,  
$\mathrm{ind}(\gamma)=\mathrm{ind}(\gamma')$. 
In fact, the sum of signs of $2k$ consecutive endpoints of $\gamma_{i}$'s on an arc equals zero. 

Let $n\in\Z$ (possibly zero) be the index of $\gamma_{1}$. 
Then we have $\mathrm{ind}(\gamma_{2i-1})=n$ and $\mathrm{ind}(\gamma_{2i})=-n$ for $i=1,2,\dots,k$. 
Moreover, the sum of signs of $k$ chords $\gamma_{1},\gamma_{3},\ldots,\gamma_{2k-1}$ is equal to $\e k$, 
and the sum of signs of $k$ chords $\gamma_{2},\gamma_{4},\ldots,\gamma_{2k}$ is also $\e k$. 
Therefore we obtain the conclusion. 
\end{proof}

\begin{proof}[Proof of Theorem~\ref{thm-writhe}]
Assume that $K$ and $K'$ are related by a single $2k$-move, 
and let $G$ and $G'$ be Gauss diagrams of $K$ and $K'$, respectively. 

Since $G$ and $G'$ are related by a combination of a single $2k$-move and Reidemeister moves,  
it follows from Lemma~\ref{lem-writhe} that 
\[
J_{n}(K)-J_{n}(K')=J_{n}(G)-J_{n}(G')\in\{0,\pm k\}
\]
for any nonzero integer~$n$. 
Therefore the assertion~(i) holds. 

If there is an odd integer $n$ such that $J_{n}(K)-J_{n}(K')=\e k$ holds for some $\e=\pm1$, 
then we have 
\[
J_{-n}(K)-J_{-n}(K')=\e k \text{ and } J_{m}(K)=J_{m}(K')
\]
for any nonzero integer $m\neq \pm n$. 
Otherwise, we have $J_{m}(K)=J_{m}(K')$ for any odd integer~$m$. 
In both cases, it follows that $J(K)-J(K')\in\{0,\pm2k\}$. 
This completes the proof for the assertion~(ii). 
\end{proof}

Theorem~\ref{thm-writhe} together with Theorem~\ref{thm-ST} implies an interesting consequence, which states that 
the set of $2k$-move equivalence classes of a virtual knot $K$ for all $k\geq1$ determines the $\Xi$-move equivalence class of $K$. 

\begin{proposition}
If two virtual knots $K$ and $K'$ are related by a finite sequence of $2k$-moves for all $k\geq1$, then $K$ and $K'$ are related by a finite sequence of $\Xi$-moves. 
\end{proposition}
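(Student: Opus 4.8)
The statement is an immediate corollary of Theorem~\ref{thm-writhe}(ii) combined with Theorem~\ref{thm-ST}, so the plan is simply to chain these two results together. First I would invoke Theorem~\ref{thm-writhe}(ii): since $K$ and $K'$ are related by a finite sequence of $2k$-moves, we have $J(K)\equiv J(K')\pmod{2k}$. The key observation is that the hypothesis asserts this for \emph{every} $k\geq 1$, so the integer $J(K)-J(K')$ is divisible by $2k$ for all positive integers $k$; the only integer with this property is $0$, whence $J(K)=J(K')$.

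Next I would apply Theorem~\ref{thm-ST}: the equality $J(K)=J(K')$ of odd writhes is equivalent to $K$ and $K'$ being related by a finite sequence of $\Xi$-moves. This yields the conclusion.

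There is no real obstacle here; the content of the proposition lies entirely in the two cited theorems, and the argument is the elementary number-theoretic remark that an integer divisible by every $2k$ must vanish. The only point worth stating carefully is that one uses the \emph{full strength} of the hypothesis (all $k\geq1$, not a single $k$) precisely to pin down $J(K)-J(K')=0$ rather than merely a congruence.
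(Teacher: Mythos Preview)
Your proposal is correct and matches the paper's own proof essentially verbatim: the paper also combines Theorem~\ref{thm-writhe}(ii) with Theorem~\ref{thm-ST}, observing that $J(K)\equiv J(K')\pmod{2k}$ for all $k\ge1$ forces $J(K)=J(K')$. The only cosmetic difference is that the paper phrases this step as a proof by contradiction rather than directly.
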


\begin{proof}
By assuming that $K$ and $K'$ satisfy $J(K)\ne J(K')$, there is a positive integer $k$ such that $J(K)\not\equiv J(K')\pmod{2k}$. 
By Theorem~\ref{thm-writhe}(ii), 
this contradicts that $K$ and $K'$ are related by a finite sequence of $2k$-moves for all $k\geq1$. 
Since we have $J(K)=J(K')$, Theorem~\ref{thm-ST} gives the conclusion. 
\end{proof}

\section{Proof of Theorem~\ref{thm-2kXi}}\label{sec-thm2}
This section is devoted to the proof of Theorem~\ref{thm-2kXi}. 
Our main tool is the notion of shell-pairs, which are certain pairs of chords of a Gauss diagram developed in~\cite{MSW} for classifying $2$-component virtual links up to $\Xi$-moves. 

Let $P_{1}$, $P_{2}$ and $P_{3}$ be three consecutive endpoints of chords of a Gauss diagram~$G$. 
We say that a chord of $G$ is a \emph{shell} if it connects $P_{1}$ and $P_{3}$. 
See the left of Figure~\ref{shell}. 
Note that the orientation of a shell can be reversed by a $\Xi$-move exchanging the positions of $P_{1}$ and $P_{3}$. 
A \emph{positive shell-pair} (or \emph{negative shell-pair}) consists of a pair of positive shells (or \emph{negative shells}) whose four endpoints are consecutive. 
See the right of the figure, where we omit the orientations of shells. 

\begin{figure}[htbp]
\centering
  \begin{overpic}[width=9cm]{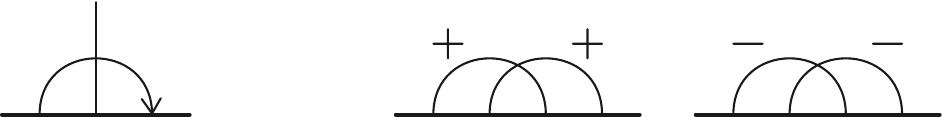}
    \put(11,16){$\e$}
    \put(6,-12){$P_{1}$}
    \put(21,-12){$P_{2}$}
    \put(36,-12){$P_{3}$}
    \put(124,-12){positive}
    \put(205,-12){negative}
   \end{overpic}
\vspace{1em}
\caption{A shell and a positive/negative shell-pair}
\label{shell}
\end{figure}

We prepare three results (Lemmas~\ref{lem-shellpair}, \ref{lem-censecutive} and Proposition~\ref{prop-normalform}) to give the proof of Theorem~\ref{thm-2kXi}. 
The first and second results will be used to prove the third one. 

The following lemma was shown in \cite{MSW,ST}. 

\begin{lemma}[{\cite[Section~4]{MSW}, \cite[Fig. 13]{ST}}]\label{lem-shellpair}
Let $G$, $G'$ and $G''$ be Gauss diagrams. 
\begin{enumerate}
\item 
If $G'$ is obtained from $G$ by a local deformation 
exchanging the positions of a shell-pair and an endpoint of a chord in $G$ 
with preserving the orientations of the chords 
as shown in the top of Figure~\ref{shellpair-move}, then $G$ and $G'$ are related by a finite sequence of $\Xi$-moves and Reidemeister moves. 
\item 
If $G''$ is obtained from $G$ by a local deformation 
adding or removing two consecutive shell-pairs with opposite signs as shown in the bottom of Figure~\ref{shellpair-move}, then $G$ and $G''$ are related by a finite sequence of $\Xi$-moves and Reidemeister moves. 
\end{enumerate}
\end{lemma}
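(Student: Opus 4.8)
The statement to prove is Lemma~\ref{lem-shellpair}, which asserts that two local operations on Gauss diagrams --- (i) sliding a shell-pair past an endpoint of another chord, and (ii) creating or cancelling two consecutive shell-pairs of opposite sign --- can each be realized by $\Xi$-moves and Reidemeister moves.

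\medskip

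My plan is to reduce both assertions to direct manipulations with $\Xi$-moves, treating a shell-pair as a rigid block that behaves like a single ``fat endpoint''.

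For assertion~(i), I would first observe that a shell is, by definition, a chord joining $P_1$ and $P_3$ where $P_1,P_2,P_3$ are three consecutive endpoints; thus the middle position $P_2$ can be occupied by an endpoint of any other chord, and a $\Xi$-move lets that middle endpoint pass through the shell (equivalently, lets the shell's two feet jump over that endpoint one at a time). Starting from this single-shell case, I would handle a shell-pair by applying the $\Xi$-move twice in succession: first slide the outer shell of the pair over the foreign endpoint, then slide the inner shell over it, using intermediate Reidemeister~II moves if a transient pair of oppositely-signed chords must be introduced to keep endpoints consecutive. The picture in Figure~\ref{shellpair-move} (top) should decompose into two applications of the elementary $\Xi$-move of Figure~\ref{Xi-move}, possibly bracketed by Reidemeister moves, and I would present it as such a sequence. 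Care is needed to check that the signs and orientations of all four shell feet, as well as of the foreign chord, are unchanged throughout --- the $\Xi$-move by definition preserves all signs and orientations, so this is automatic, but it must be stated.

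For assertion~(ii), the strategy is to reduce the cancellation of two consecutive shell-pairs of opposite sign to a Reidemeister~II move. The key point is that a positive shell-pair and an adjacent negative shell-pair have, between them, four shells whose eight feet are consecutive on the circle; using $\Xi$-moves (and assertion~(i) applied repeatedly to reorder feet) one can bring the two shell-pairs into a configuration where a positive shell is directly parallel to a negative shell with nothing between their feet, i.e.\ a Reidemeister~II pattern in the Gauss-diagram sense (Figure~\ref{Reid-Gauss}). Cancelling that pair by Reidemeister~II, and then cancelling the remaining parallel pair likewise, removes both shell-pairs; reversing the argument adds them. So assertion~(ii) follows from assertion~(i) together with a Reidemeister~II move, as indicated by the citation to \cite[Fig.~13]{ST}.

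\medskip

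The main obstacle is bookkeeping rather than conceptual: one must verify that in each step the endpoints involved in the $\Xi$-move or the Reidemeister~II move are genuinely \emph{consecutive} on the underlying circle, since both moves are local and require this adjacency. Because a shell-pair occupies four consecutive endpoints flanking one middle endpoint, pushing it past a foreign endpoint --- or bringing two shell-pairs together --- may temporarily require inserting a cancelling Reidemeister~II pair to ``make room'', and then removing it afterwards. I expect the cleanest write-up to be a figure-by-figure reduction: exhibit the top of Figure~\ref{shellpair-move} as a composite of the basic $\Xi$-move of Figure~\ref{Xi-move} (applied twice) plus Reidemeister moves for~(i), and exhibit the bottom of Figure~\ref{shellpair-move} as~(i) followed by a Reidemeister~II move for~(ii). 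Since the lemma is explicitly attributed to \cite[Section~4]{MSW} and \cite[Fig.~13]{ST}, I would either reproduce the relevant figures with a one-line verification of signs and orientations, or simply cite those sources and note that the arguments there apply verbatim in the present (knot, rather than $2$-component link) setting.
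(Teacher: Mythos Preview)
The paper gives no proof of this lemma at all: it is stated with attribution to \cite[Section~4]{MSW} and \cite[Fig.~13]{ST} and then used without further argument. Your final option---to simply cite those sources---is exactly what the paper does, so in that sense your proposal matches. Your sketched argument (sliding each shell past the foreign endpoint via $\Xi$-moves for~(i), then reducing~(ii) to Reidemeister~II cancellations after rearranging feet) is a reasonable outline of what the cited references actually contain, and goes beyond what the present paper provides.
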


\begin{figure}[htbp]
\centering
  \begin{overpic}[width=8cm]{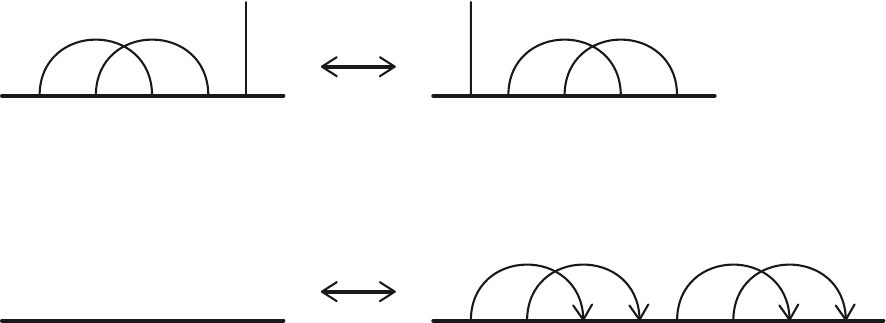}
    \put(13.5,74){$\e$}
    \put(46,74){$\e$}
    \put(134,74){$\e$}
    \put(166,74){$\e$}
    \put(125,17){$\e$}
    \put(157,17){$\e$}
    \put(172,17){$-\e$}
    \put(203,17){$-\e$}
   \end{overpic}
\caption{Local deformations in Lemma~\ref{lem-shellpair}}
\label{shellpair-move}
\end{figure}

\begin{lemma}\label{lem-censecutive}
Let $G$ and $G'$ be Gauss diagrams and $k$ a positive integer. 
If $G'$ is obtained from $G$ by a local deformation adding or removing $k$ consecutive shell-pairs with the same sign~$\e$ as shown in Figure~\ref{censec-shellpairs}, then $G$ and $G'$ are related by a finite sequence of $2k$-moves, $\Xi$-moves and Reidemeister moves. 
\end{lemma}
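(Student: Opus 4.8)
The plan is to reduce the statement about $k$ consecutive shell-pairs of the \emph{same} sign to the two operations already legitimized in Lemma~\ref{lem-shellpair}, using a single $2k$-move to produce the raw material. First I would observe that a $2k$-move on a Gauss diagram, by definition, inserts (or deletes) $2k$ chords of the same sign $\e$ whose initial and terminal endpoints alternate along a short arc of the circle; the key point is that, up to where the insertion is performed, this block of $2k$ alternating chords is the same combinatorial object as $k$ consecutive shell-pairs of sign $\e$ \emph{after} the endpoints have been slid past each other and paired up. So the heart of the argument is: starting from $k$ consecutive same-sign shell-pairs, perform a $2k$-move inserting $2k$ new chords of sign $-\e$ (nested appropriately next to the shell-pairs), then use $\Xi$-moves and Reidemeister~II/III moves to reorganize the resulting configuration into $k$ shell-pairs of sign $\e$ sitting next to $k$ shell-pairs of sign $-\e$, and finally cancel the latter against the former in pairs via Lemma~\ref{lem-shellpair}(ii).

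Concretely, the steps I would carry out are: (1) fix a diagram for the $k$ same-sign shell-pairs as in Figure~\ref{censec-shellpairs}; (2) apply one $2k$-move that creates $2k$ chords of sign $-\e$ in a position immediately adjacent to the block of shell-pairs, arranged so their alternating endpoints interleave with the free endpoints $P_2$ of the shells; (3) use $\Xi$-moves (which, as noted in Section~\ref{sec-intro}, freely reverse shell orientations and, via Lemma~\ref{lem-shellpair}(i), slide a shell-pair past any single chord-endpoint) together with Reidemeister~I--III moves to disentangle the newly created chords into $k$ genuine $(-\e)$-shell-pairs, each one consecutive; (4) now the diagram contains $k$ consecutive $\e$-shell-pairs followed by $k$ consecutive $(-\e)$-shell-pairs, and I pair the $i$-th $\e$-shell-pair with the $i$-th $(-\e)$-shell-pair — using Lemma~\ref{lem-shellpair}(i) to bring each such pair into the adjacent position required — and invoke Lemma~\ref{lem-shellpair}(ii) to delete each opposite-sign pair of shell-pairs; (5) after all $k$ cancellations the block of shell-pairs has disappeared, i.e. we have reached $G'$, and the whole process used only a $2k$-move, $\Xi$-moves and Reidemeister moves. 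Running this in reverse handles the "adding" direction, though symmetry of the move list already makes both directions equivalent.

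\textbf{Main obstacle.} The delicate point is step~(3): one must verify that the $2k$ chords inserted by the $2k$-move can actually be shepherded, using only $\Xi$-moves and Reidemeister moves, into the shape of $k$ separated $(-\e)$-shell-pairs positioned next to the existing $\e$-shell-pairs. This is a purely local bookkeeping problem about the cyclic order of endpoints on the circle, but it requires choosing the insertion site of the $2k$-move with care so that the alternating pattern of the $2k$ new chords lines up with the $P_1,P_2,P_3$ triples of the shells; getting the interleaving right so that Lemma~\ref{lem-shellpair}(i)'s "slide a shell-pair past one endpoint" move suffices (rather than needing to move a full chord through a shell-pair, which is not obviously allowed) is where the real content lies. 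I expect a careful figure — an analogue of Figures~\ref{shellpair-move} and~\ref{censec-shellpairs} showing the intermediate Gauss diagram right after the $2k$-move — to be essentially the entire proof, with the verbal argument reduced to "apply Lemma~\ref{lem-shellpair}(i) $k$ times, then Lemma~\ref{lem-shellpair}(ii) $k$ times."
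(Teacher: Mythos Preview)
Your plan differs from the paper's and carries a built-in redundancy. Your step~(3) asks to convert a freshly inserted $2k$-move block into $k$ shell-pairs using only $\Xi$-moves and Reidemeister moves; but if that can be done, then running it in reverse on the original $\e$-shell-pairs and deleting the resulting block by a single $2k$-move already proves the lemma outright. The whole detour through a $(-\e)$-block and $k$ cancellations via Lemma~\ref{lem-shellpair}(ii) buys nothing. The paper indeed takes the direct route: starting from the empty side~$G$, it performs one $2k$-move of sign~$\e$ (the \emph{same} sign as the target) and reorganizes those $2k$ chords straight into the $k$ desired shell-pairs, never touching Lemma~\ref{lem-shellpair}(ii).

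More importantly, the mechanism you sketch for step~(2)--(3) does not match the definition of a $2k$-move. The move deposits its $4k$ endpoints on two short \emph{empty} arcs of the circle; you cannot place them so that they ``interleave with the free endpoints~$P_2$ of the shells'' at the moment of insertion. The paper's actual device---which is the real content of the proof and is missing from your proposal---is to first create a single auxiliary chord by a Reidemeister~I move and then perform the $2k$-move so that its two arcs sit immediately on either side of that chord. This extra endpoint is what lets a first $\Xi$-move peel off a shell-pair from the block; Lemma~\ref{lem-shellpair}(i) then slides that shell-pair out of the way, and one repeats. At the end the auxiliary chord is removed by Reidemeister~I. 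Without this trick your step~(3) remains the whole problem, and your ``Main obstacle'' paragraph is an accurate self-diagnosis rather than a plan for overcoming it.
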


\begin{figure}[htbp]
\centering
\vspace{1em}
  \begin{overpic}[width=9cm]{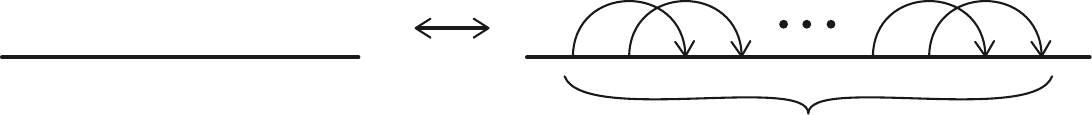}
    \put(139,29){$\e$}
    \put(165,29){$\e$}
    \put(209,29){$\e$}
    \put(235,29){$\e$}
    \put(164,-12){$k$ shell-pairs}
   \end{overpic}
\vspace{1em}
\caption{Adding or removing $k$ consecutive shell-pairs}
\label{censec-shellpairs}
\end{figure}

\begin{proof}
We only prove the result for $k=2$. 
The other cases are shown similarly. 

Assume that $G'$ is obtained from $G$ by adding two consecutive shell-pairs with sign~$\e$. 
The proof follows from Figure~\ref{pf-lem-censecutive}, 
which gives a sequence of Gauss diagrams 
\[
G=G_{0}, G_{1},\ldots,G_{6}=G'
\] 
such that for each $i=1,2,\ldots,6$, 
$G_{i}$ is obtained from $G_{i-1}$ by a combination of $2k$-moves, $\Xi$-moves and Reidemeister moves. 
More precisely, 
we obtain $G_{1}$ from $G_{0}=G$ by a Reidemeister move I adding a positive chord, 
$G_{2}$ from $G_{1}$ by a $4$-move adding four chords with sign $\e$, 
and $G_{3}$ from $G_{2}$ by a $\Xi$-move exchanging the positions of the two endpoints with dots $\bullet$. 
By Lemma~\ref{lem-shellpair}(i), we can move the resulting shell-pair (with preserving the orientations of the chords) to get $G_{4}$ from $G_{3}$. 
After deforming $G_{4}$ into $G_{5}$ by a $\Xi$-move, 
we finally obtain $G_{6}=G'$ by two $\Xi$-moves reversing the orientations of shells and a Reidemeister move I removing a positive chord. 
\end{proof}

\begin{figure}[t]
\centering
\vspace{1em}
  \begin{overpic}[width=11cm]{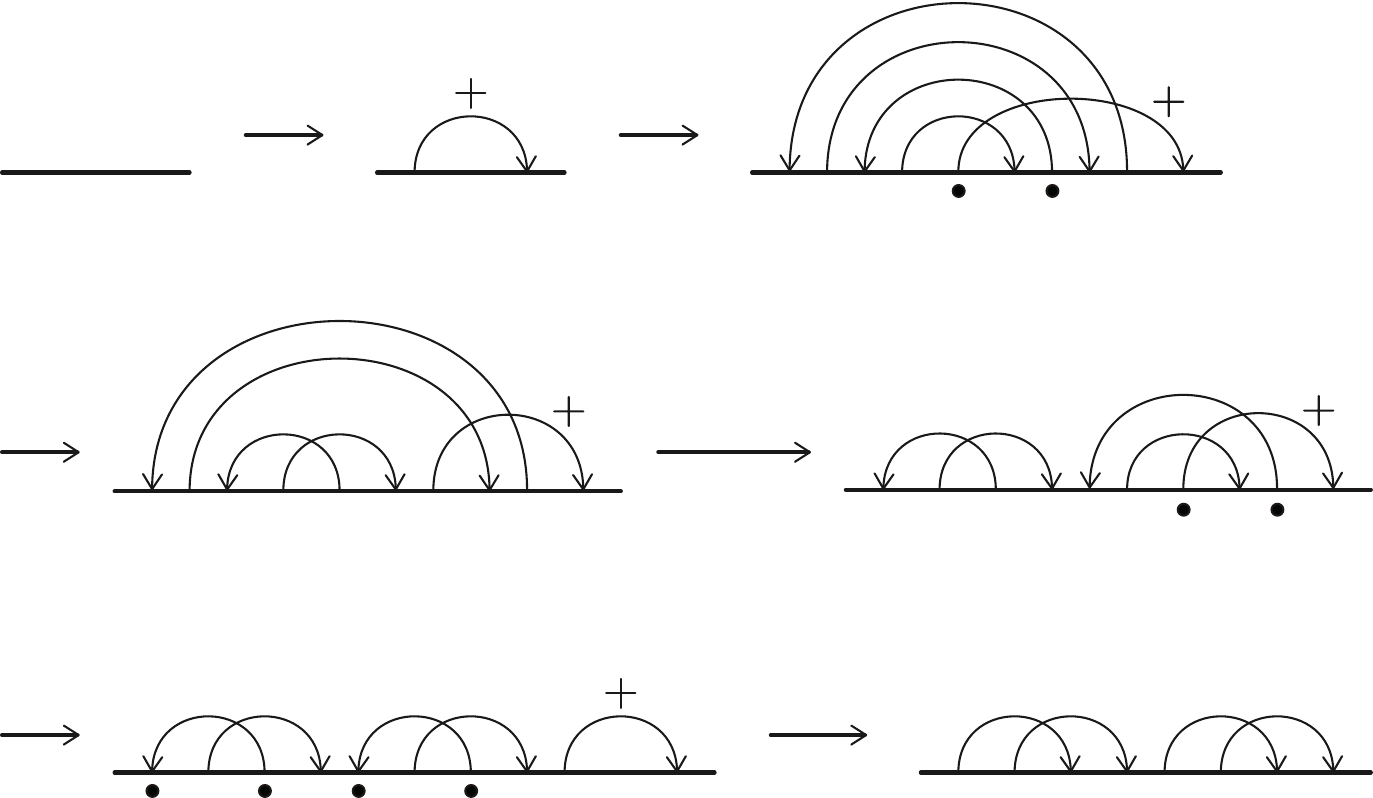}
    \put(7,124){$G=G_{0}$}
    \put(102,124){$G_{1}$}
    \put(223,124){$G_{2}$}
    \put(80,56){$G_{3}$}
    \put(247,56){$G_{4}$}
    \put(89,-12){$G_{5}$}
    \put(243,-12){$G_{6}=G'$}
    \put(63,156){I}
    \put(147,156){4}
    \put(5.5,84){$\Xi$}
    \put(149.5,84){Lem~\ref{lem-shellpair}}
    \put(5.5,19){$\Xi$}
    \put(177.5,19){$\Xi$, I}
    \put(216,183){$\e$}
    \put(216,174){$\e$}
    \put(216,166){$\e$}
    \put(216,157){$\e$}

    \put(56,85){$\e$}
    \put(82,85){$\e$}
    \put(75,111){$\e$}
    \put(75,102){$\e$}
    \put(205,85){$\e$}
    \put(231,85){$\e$}
    \put(267,85){$\e$}
    \put(267,94){$\e$}
    \put(38.5,21){$\e$}
    \put(64.5,21){$\e$}
    \put(86,21){$\e$}
    \put(112,21){$\e$}
    \put(222,21){$\e$}
    \put(248,21){$\e$}
    \put(269.5,21){$\e$}
    \put(295.5,21){$\e$}
   \end{overpic}
\vspace{1em}
\caption{Proof of Lemma~\ref{lem-censecutive} for $k=2$}
\label{pf-lem-censecutive}
\end{figure}

For an integer $a$, 
let $G(a)$ be the Gauss diagram in Figure~\ref{normalform}; 
that is, it consists of $|a|$ shell-pairs with sign $\e$, 
where $\e=1$ for $a>0$ and $\e=-1$ for $a<0$. 
In particular, $G(0)$ is the Gauss diagram with no chords. 
We denote by $K(a)$ the virtual knot represented by $G(a)$. 
We remark that $J(K(a))=2a$. 

\begin{figure}[htbp]
\centering
  \begin{overpic}[width=6cm]{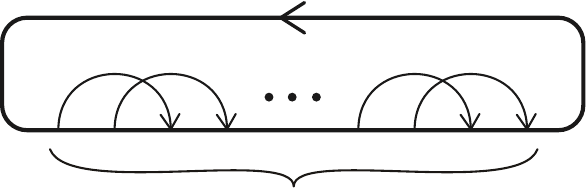}
    \put(20,34){$\e$}
    \put(59,34){$\e$}
    \put(107,34){$\e$}
    \put(146,34){$\e$}
    \put(55,-12){$|a|$ shell-pairs}
   \end{overpic}
\vspace{1em}
\caption{The Gauss diagram $G(a)$}
\label{normalform}
\end{figure}

We give a normal form of an equivalence class of virtual knots under $2k$-moves and $\Xi$-moves as follows. 

\begin{proposition}\label{prop-normalform}
Any virtual knot $K$ is related to $K(a)$ for some $a\in\Z$ with $0\le a<k$ by a finite sequence of $2k$-moves and $\Xi$-moves. 
\end{proposition}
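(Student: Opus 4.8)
The plan is to start from an arbitrary Gauss diagram $G$ of $K$ and reduce it, using only $2k$-moves, $\Xi$-moves and Reidemeister moves, to one of the model diagrams $G(a)$ with $0 \le a < k$. The strategy has two phases: first, turn every chord into a shell; second, cancel and collect the resulting shell-pairs until only the normal form remains.

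For the first phase, I would use Lemma~\ref{lem-shellpair}(i) (and its mirror) as the workhorse. Given any chord $\gamma$ of $G$, one can slide one of its endpoints along the circle past every other endpoint: passing a single chord's endpoint can be arranged (after a Reidemeister~II creating a small shell-pair, or by the standard "finger move" argument) to be a composition of moves of the type in Lemma~\ref{lem-shellpair}. More directly, I expect the cleanest route is to first use Reidemeister~II moves to double each chord into two parallel chords of opposite sign, then reorganize endpoints so that each original chord becomes a shell (its two neighbors being endpoints of the newly created cancelling partner), much as in the proof of Theorem~\ref{thm-ST} in~\cite{ST}. The upshot of this phase should be: $G$ is related, by $\Xi$-moves and Reidemeister moves alone, to a Gauss diagram consisting entirely of shell-pairs — say $p$ positive shell-pairs and $q$ negative shell-pairs, lined up consecutively. (That $\Xi$-moves alone suffice here is exactly the content of Theorem~\ref{thm-ST}, since this operation does not change $J$.)

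For the second phase I would invoke Lemma~\ref{lem-shellpair}(ii) to cancel positive against negative shell-pairs in pairs, reducing to $|p-q|$ shell-pairs all of the same sign $\e$, i.e. to $G(a)$ with $a = p - q$. Finally, Lemma~\ref{lem-censecutive} lets me remove $k$ consecutive like-signed shell-pairs at the cost of $2k$-moves (and $\Xi$-, Reidemeister moves), so I may repeatedly strip off blocks of $k$ shell-pairs until the number of remaining shell-pairs is in the range $0 \le a < k$; if $a$ started negative I first note $G(a)$ with $a<0$ can be brought to $G(a+k)$ by \emph{adding} $k$ shell-pairs of the opposite sign via Lemma~\ref{lem-shellpair}(ii) to cancel, or more simply one argues on $|a| \bmod k$ directly. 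This yields $K$ related to $K(a)$ with $0 \le a < k$, as claimed.

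The main obstacle is the first phase: rigorously producing the reduction of an arbitrary Gauss diagram to a disjoint union of shell-pairs using only $\Xi$-moves and Reidemeister moves. The $2k$-moves play no role there, so this is essentially a lemma about $\Xi$-equivalence, and while it is morally a consequence of Theorem~\ref{thm-ST} (two virtual knots with equal odd writhe are $\Xi$-equivalent, and one can compute $J$ of the shell-pair normal form), turning "morally a consequence" into an explicit sequence of moves — i.e. showing the shell-pair configuration is itself a legitimate representative reachable by $\Xi$-moves — requires care. Concretely, I would either (a) cite/adapt the normal-form argument of~\cite{ST,MSW} that every virtual knot is $\Xi$-equivalent to some $G(a)$ with $a = J(K)/2$, and then separately handle the mod-$k$ reduction via Lemmas~\ref{lem-shellpair}(ii) and~\ref{lem-censecutive}; or (b) give the endpoint-sliding argument from scratch. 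Route (a) is shorter and I expect that is what I would write, with the bulk of the new content being the straightforward bookkeeping of how many shell-pairs can be eliminated $k$ at a time.
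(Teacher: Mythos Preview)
Your preferred route (a) is precisely the paper's proof: it cites \cite[Proposition~7.2]{ST} to reduce $K$ to some $K(a)$ with $a\in\Z$ by $\Xi$-moves and Reidemeister moves alone, and then uses Lemmas~\ref{lem-shellpair}(ii) and~\ref{lem-censecutive} to adjust $a$ modulo $k$ into the range $0\le a<k$. The only cosmetic difference is that the paper phrases the mod-$k$ step as \emph{adding} blocks of $k$ shell-pairs of the opposite sign (Lemma~\ref{lem-censecutive}) and then cancelling opposite pairs (Lemma~\ref{lem-shellpair}(ii)), rather than directly removing $k$ like-signed shell-pairs as you suggest --- but since Lemma~\ref{lem-censecutive} permits both addition and removal, either phrasing works.
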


\begin{proof}
By \cite[Proposition 7.2]{ST}, any Gauss diagram $G$ of $K$ can be deformed into $G(a)$ for some $a\in\Z$ by a finite sequence of $\Xi$-moves and Reidemeister moves. 
If $a$ satisfies $0\le a<k$, then we have the conclusion. 

For $k\le a$, 
there is a unique positive integer $p$ such that $0\le a-pk<k$. 
Lemma~\ref{lem-censecutive} allows us to add $pk$ consecutive negative shell-pairs to $G(a)$. 
From the obtained Gauss diagram, we can remove $pk$ pairs of shell-pairs with opposite signs by Lemma~\ref{lem-shellpair}(ii) in order to obtain $G(a-pk)$.  
Therefore $G$ is related to $G(a-pk)$ by a finite sequence of $2k$-moves, $\Xi$-moves and Reidemeister moves. 

In the case $a<0$, let $q$ be the positive integer such that $0\le a+qk<k$. 
Using Lemmas~\ref{lem-shellpair}(ii) and \ref{lem-censecutive}, we add $qk$ consecutive positive shell-pairs to $G(a)$, 
and then remove $qk$ pairs of shell-pairs with opposite signs. 
Finally $G$ is related to $G(a+qk)$ by a finite sequence of $2k$-moves, $\Xi$-moves and Reidemeister moves. 
\end{proof}

Now we are ready to prove Theorem~\ref{thm-2kXi}. 

\begin{proof}[Proof of Theorem~\ref{thm-2kXi}]
(i)$\Rightarrow$(ii): 
By Proposition~\ref{prop-normalform}, $K$ and $K'$ are related to $K(a)$ and $K(a')$ for some $a,a'\in\Z$ with $0\le a,a'<k$, respectively, by a finite sequence of $2k$-moves and $\Xi$-moves. 
Then it follows from Theorems~\ref{thm-writhe}(ii) and~\ref{thm-ST} that 
\[
J(K)\equiv J(K(a))=2a \pmod{2k} 
\]
and
\[
J(K')\equiv J(K(a'))=2a' \pmod{2k}.
\] 
By assumption, we have $2a\equiv 2a' \pmod{2k}$.
Since the nonnegative integers $a$ and~$a'$ are less than $k$, we have $a=a'$. 
Therefore $K(a)$ and $K(a')$ coincide. 

(ii)$\Rightarrow$(i): 
This follows from Theorems~\ref{thm-writhe}(ii) and \ref{thm-ST}. 
\end{proof}

The following result is an immediate consequence of the proof of Theorem~\ref{thm-2kXi}. 

\begin{corollary}
A complete representative system of the equivalence classes of virtual knots under $2k$-moves and $\Xi$-moves is given by the set 
\[
\{K(a)\mid a\in\Z,\ 0\le a<k\}. 
\]
In particular, the number of equivalence classes is $k$. 
\qed
\end{corollary}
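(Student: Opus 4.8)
The plan is to read the statement off directly from the proof of Theorem~\ref{thm-2kXi}, since all the work has in fact already been done there. First I would invoke Proposition~\ref{prop-normalform}: it tells us that every virtual knot $K$ is related by a finite sequence of $2k$-moves and $\Xi$-moves to $K(a)$ for some integer $a$ with $0\le a<k$. Hence the set $\{K(a)\mid a\in\Z,\ 0\le a<k\}$ meets every equivalence class of virtual knots under $2k$-moves and $\Xi$-moves, i.e.\ it is a \emph{complete} set of representatives.

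Next I would check that these $k$ candidates lie in pairwise distinct classes. Recall the remark that $J(K(a))=2a$. Suppose $K(a)$ and $K(a')$, with $0\le a,a'<k$, are related by a finite sequence of $2k$-moves and $\Xi$-moves. By the implication (ii)$\Rightarrow$(i) of Theorem~\ref{thm-2kXi} (equivalently, by Theorems~\ref{thm-writhe}(ii) and~\ref{thm-ST}), this forces $J(K(a))\equiv J(K(a'))\pmod{2k}$, that is $2a\equiv 2a'\pmod{2k}$, so $a\equiv a'\pmod{k}$. Since $a$ and $a'$ both lie in $\{0,1,\dots,k-1\}$, we conclude $a=a'$, and hence $K(a)=K(a')$.

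Combining the two steps, $\{K(a)\mid a\in\Z,\ 0\le a<k\}$ is a complete representative system whose cardinality is exactly $k$, which is the assertion. There is no genuine obstacle here: the two ingredients needed — surjectivity onto the $K(a)$ with $0\le a<k$ via Proposition~\ref{prop-normalform}, and the numerical separation of the $K(a)$ via the odd writhe and Theorem~\ref{thm-2kXi} — are precisely what the proof of Theorem~\ref{thm-2kXi} already establishes, so the corollary is a repackaging of that argument and can be stated with the proof omitted.
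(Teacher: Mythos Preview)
Your proposal is correct and matches the paper's own treatment: the corollary is stated with proof omitted as an immediate consequence of the proof of Theorem~\ref{thm-2kXi}, and the two ingredients you spell out (Proposition~\ref{prop-normalform} for completeness, and the odd-writhe separation $J(K(a))=2a$ together with Theorems~\ref{thm-writhe}(ii) and~\ref{thm-ST} for distinctness) are exactly the steps used there.
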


\section{$2k$-move distance}\label{sec-distance}
This section studies the $2k$-move distance for virtual knots. 
For two virtual knots $K$ and $K'$ that are related by a finite sequence of $2k$-moves, 
we denote by $\mathrm{d}_{2k}(K,K')$ the minimal number of $2k$-moves needed to deform a diagram of $K$ into that of $K'$.  
In particular, we set $\mathrm{u}_{2k}(K)=\mathrm{d}_{2k}(K,O)$, 
where $O$ is the trivial knot. 
We will show that for any virtual knot $K$ and any positive integer $a$, there is a virtual knot $K'$ such that $\mathrm{d}_{2k}(K,K')=a$ (Proposition~\ref{prop-distance}). 

In~\cite[Theorem~2.3]{JCK}, Jeong, Choi and Kim gave a lower bound for $\mathrm{d}_{2k}(K,K')$ using the affine index polynomials of $K$ and $K'$, which can be rephrased in terms of the $n$-writhes as follows. 

\begin{theorem}[{cf. \cite[Theorem~2.3]{JCK}}]\label{thm-lowerbound}
Let $K$ and $K'$ be virtual knots such that they are related by a finite sequence of $2k$-moves. 
Then we have 
\[
\mathrm{d}_{2k}(K,K')\geq\frac{1}{k}\sum_{n>0}|J_{n}(K)-J_{n}(K')|=\frac{1}{k}\sum_{n<0}|J_{n}(K)-J_{n}(K')|.
\] 
In particular, when $K'=O$ is the trivial knot, we have 
\[
\mathrm{u}_{2k}(K)\geq\frac{1}{k}\sum_{n>0}|J_{n}(K)|=\frac{1}{k}\sum_{n<0}|J_{n}(K)|.
\]
\end{theorem}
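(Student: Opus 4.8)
The plan is to show that a single $2k$-move changes the quantity $\sum_{n>0}|J_n(\cdot)|$ by at most $k$, and then iterate. By Lemma~\ref{lem-writhe}, if two Gauss diagrams are related by a single $2k$-move, there is a unique integer $n_0$ (possibly zero) such that $J_{n_0}$ and $J_{-n_0}$ each change by exactly $\varepsilon k$ for some $\varepsilon=\pm1$, while all other $J_m$ are unchanged. Passing to virtual knots via Reidemeister invariance of $J_m$ for $m\ne0$, I would first treat the degenerate case $n_0=0$: then every $J_m$ with $m\ne0$ is preserved, so the left-hand sum is unchanged and the inequality is trivially maintained. When $n_0\ne0$, exactly one positive index, namely $|n_0|$, has its writhe changed, and it changes by $\pm k$; hence $\bigl|\,\sum_{n>0}|J_n(K_1)-J_n(K_2)|-\sum_{n>0}|J_n(K_2)-J_n(K_3)|\,\bigr|\le k$ whenever $K_1,K_2$ differ by one $2k$-move (reading the telescoping across a shortest sequence from $K$ to $K'$).

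\medskip
The main step is therefore the triangle-inequality bookkeeping. Fix a sequence of $2k$-moves $K=K_0,K_1,\dots,K_d=K'$ with $d=\mathrm{d}_{2k}(K,K')$. For each positive integer $n$, the sequence $J_n(K_0),J_n(K_1),\dots,J_n(K_d)$ changes by $0$ or $\pm k$ at each step, and the total number of steps at which it actually changes is at least $|J_n(K)-J_n(K')|/k$. By the uniqueness in Lemma~\ref{lem-writhe}, each single $2k$-move is ``charged'' to at most one positive index $n$ (the value $|n_0|$ for that move, when $n_0\ne0$; no positive index when $n_0=0$). Summing over $n>0$ gives
\[
d \;\ge\; \sum_{n>0}\frac{|J_n(K)-J_n(K')|}{k},
\]
which is the first claimed bound. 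The equality $\sum_{n>0}|J_n(K)-J_n(K')|=\sum_{n<0}|J_n(K)-J_n(K')|$ is immediate from Lemma~\ref{lem-writhe} applied step by step: each $2k$-move changes $J_{n_0}$ and $J_{-n_0}$ by the same amount $\varepsilon k$, so the positive-index and negative-index partial differences agree term by term after the telescoping, hence so do their absolute sums. The special case $K'=O$ follows since $J_n(O)=0$ for all $n$.

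\medskip
The part I expect to need the most care is the ``charging'' argument: one must be sure that a single $2k$-move cannot simultaneously force changes in two distinct positive indices, which is exactly the content of the uniqueness clause in Lemma~\ref{lem-writhe}, so the obstacle is really just invoking that lemma cleanly rather than any new difficulty. A secondary subtlety is the bridge between Gauss diagrams and virtual knots: Lemma~\ref{lem-writhe} is stated for Gauss diagrams related by a \emph{single} $2k$-move, whereas consecutive diagrams in the sequence realizing $\mathrm{d}_{2k}(K,K')$ are related by a $2k$-move \emph{together with} Reidemeister moves; this is handled exactly as in the proof of Theorem~\ref{thm-writhe}, using that $J_m$ is a Reidemeister invariant for $m\ne0$, so the net change in each $J_m$ across one step of the sequence is still governed by Lemma~\ref{lem-writhe}.
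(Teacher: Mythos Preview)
Your proposal is correct and follows the same route the paper indicates: the paper states only that the theorem ``is a direct consequence of Lemma~\ref{lem-writhe}'' without further detail, and your charging argument is precisely the expansion of that remark. Your handling of the equality between the positive- and negative-index sums (via $J_{n}(K)-J_{n}(K')=J_{-n}(K)-J_{-n}(K')$, which follows step by step from Lemma~\ref{lem-writhe}) and of the $n_0=0$ case is accurate and matches what the paper implicitly relies on.
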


We remark that this theorem is a direct consequence of Lemma~\ref{lem-writhe}. 

In~\cite[Example~2.4]{JCK}, Jeong, Choi and Kim demonstrated that the lower bound for $\mathrm{u}_{2k}(K)$ in the theorem is sharp for some virtual knots $K$.  
However, they did not make it clear whether for a pair of nontrivial virtual knots $K$ and $K'$, the lower bound for $\mathrm{d}_{2k}(K,K')$  is sharp. 
We answer this by proving the following. 

\begin{proposition}\label{prop-distance}
Let $a$ be a positive integer. 
For any virtual knot $K$, 
there is a virtual knot $K'$ such that $\mathrm{d}_{2k}(K,K')=a$. 
\end{proposition}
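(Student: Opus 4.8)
The plan is to construct $K'$ explicitly from $K$ by performing $a$ ``independent'' $2k$-moves, and then use the lower bound of Theorem~\ref{thm-lowerbound} to show that no shorter sequence can exist. The key point is to arrange the $a$ added half-twist bundles so that they contribute to $n$-writhes with $a$ \emph{distinct} nonzero indices $n_1,\dots,n_a$, all positive say, so that each $2k$-move is ``detected'' separately.

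First I would fix a Gauss diagram $G$ of $K$ and choose a free arc on its underlying circle — an arc containing no chord endpoints. On this arc I would attach, one after another, the $2k$-chord configurations of Figure~\ref{2k-Gauss}, but nested so that the $j$-th bundle is separated from the others by suitably many extra endpoints; concretely, I would insert between consecutive bundles some auxiliary chords (realizable by Reidemeister~I and~II moves, hence not changing the virtual knot type before the $2k$-moves) so that the index of the $j$-th bundle's chords, computed by the recipe preceding Lemma~\ref{lem-writhe}, takes a prescribed value $n_j$ with $n_1,\dots,n_a$ pairwise distinct and positive. Let $K'$ be the resulting virtual knot. By construction $K$ and $K'$ are related by $a$ consecutive $2k$-moves, so $\mathrm{d}_{2k}(K,K')\le a$. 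Moreover, by the computation in the proof of Lemma~\ref{lem-writhe}, each such $2k$-move changes $J_{n_j}$ and $J_{-n_j}$ by exactly $\e_j k$ and leaves all other $n$-writhes fixed; choosing all $\e_j=+1$ and the $n_j$ pairwise distinct, we get $J_{n_j}(K')-J_{n_j}(K)=k$ for each $j$ and $J_n(K')=J_n(K)$ for every positive $n\notin\{n_1,\dots,n_a\}$.

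Then Theorem~\ref{thm-lowerbound} gives
\[
\mathrm{d}_{2k}(K,K')\ \ge\ \frac{1}{k}\sum_{n>0}\bigl|J_n(K)-J_n(K')\bigr|\ =\ \frac{1}{k}\sum_{j=1}^{a}k\ =\ a,
\]
so $\mathrm{d}_{2k}(K,K')=a$, as desired.

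The main obstacle is the bookkeeping in the first step: one must verify that the auxiliary chords inserted to ``spread out'' the indices $n_1,\dots,n_a$ can indeed be introduced by Reidemeister moves on the original $G$ (so that the underlying knot is still $K$ before the $2k$-moves), and that after inserting them the index of the $j$-th $2k$-bundle really equals the prescribed $n_j$. This is a routine but careful computation with the arc-sign recipe: adding a small kink of sign $\eta$ by a Reidemeister~I move shifts the index of any chord whose specified arc passes over exactly one of the kink's endpoints by $\pm\eta$, and one simply chooses the kinks' signs and locations to realize the target values. A clean alternative, avoiding new chords entirely, is to take $K'=K(a)\#K$ — i.e.\ the connected sum of $K$ with the normal form $K(a)$ of Section~\ref{sec-thm2}, whose Gauss diagram is $a$ consecutive positive shell-pairs — but one still needs to check that the shell-pairs of $K(a)$ carry $a$ distinct indices and that connected sum is additive on $n$-writhes; the explicit insertion approach above is the most self-contained.
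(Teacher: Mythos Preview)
Your overall strategy---construct $K'$ by $a$ explicit $2k$-moves for the upper bound, then invoke Theorem~\ref{thm-lowerbound} for the lower bound---is exactly the paper's. The difference is that you work harder than necessary. The paper does not spread the $a$ bundles over distinct indices $n_1,\dots,n_a$; it simply stacks all $a$ of them at the \emph{same} index $n=1$. Concretely, one takes a long diagram $T$ of $K$, closes it up through a single extra real crossing, and inserts $2ak$ consecutive half-twists in that clasp. The resulting $K'$ satisfies $J_{1}(K')=J_{1}(K)+ak$ and $J_{n}(K')=J_{n}(K)$ for every $n\ne0,\pm1$, so Theorem~\ref{thm-lowerbound} already gives $\mathrm{d}_{2k}(K,K')\ge\tfrac{1}{k}\,|J_{1}(K)-J_{1}(K')|=a$. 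One index suffices because the lower bound only needs $\sum_{n>0}|J_{n}(K)-J_{n}(K')|=ak$, and a single term can carry the whole sum.

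Your more elaborate construction is salvageable, but the details you sketch are not quite right. A Reidemeister~I kink has \emph{adjacent} endpoints, so no existing chord's specified arc $\alpha$ can contain exactly one of them; an R-I kink therefore shifts no index already present. What does work is to place one clump of a \emph{subsequent} $2k$-bundle between the kink's two endpoints---but then you should say so explicitly, and the bookkeeping for $a$ bundles with $a$ prescribed distinct positive indices becomes genuinely tedious for no gain. Your alternative $K'=K(a)\#K$ is also problematic: the Gauss diagram $G(a)$ of Section~\ref{sec-thm2} consists of $a$ shell-pairs, not $2k$-bundles, so there is no evident reason $K(a)$ is reachable from the unknot by $a$ (or indeed any number of) $2k$-moves alone, and its shells do not carry $a$ distinct indices in any case. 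Drop the distinct-index requirement and the proof collapses to the paper's two-line computation.
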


\begin{proof}
Consider a long virtual knot diagram $T$ whose closure represents the virtual knot $K$. 
Let $K'$ be the virtual knot represented by the diagram $D$ in the left of Figure~\ref{pf-prop-distance}. 
The Gauss diagram $G_{D}$ associated with $D$ is given in the right of the figure, 
where the boxed part depicts the Gauss diagram $G_{T}$ corresponding to $T$. 

\begin{figure}[htbp]
\centering 
  \begin{overpic}[width=11cm]{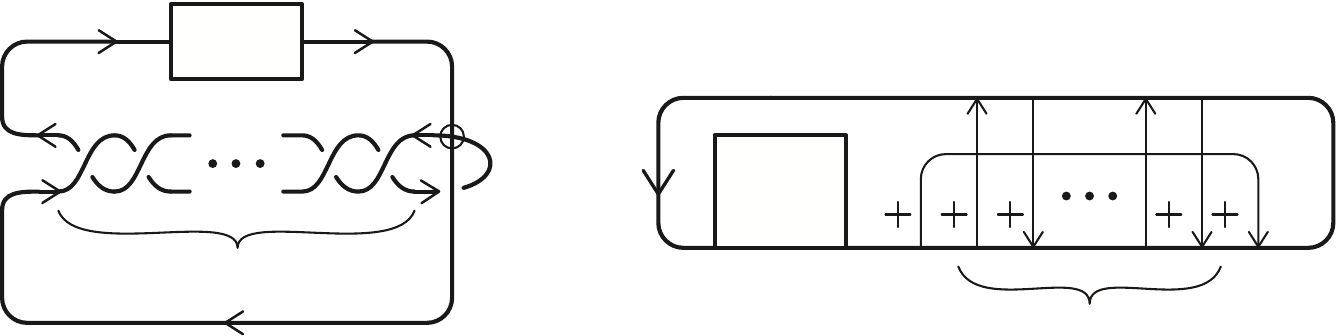}
    \put(52,65){$T$}
    \put(22,9){$2ak$ half-twists}
    \put(176,30){$G_{T}$}
    \put(234,-3){$2ak$ chords}
   \end{overpic}
\caption{A diagram $D$ of $K'$ and its Gauss diagram $G_{D}$}
\label{pf-prop-distance}
\end{figure}

Removing $2ak$ half-twists from $D$ by $2k$-moves $a$ times, 
we can deform $D$ into a diagram of $K$. 
Therefore we have $\mathrm{d}_{2k}(K,K')\leq a$. 

The $2ak$ vertical chords in $G_{D}$ consist of $ak$ positive chords with index~$1$ and $ak$ positive chords with index $-1$, 
and the remaining one chord of $G_{D}$ excluding the chords in $G_{T}$ has index $0$. 
Hence it follows from \cite[Lemma~4.3]{ST} that 
\[
J_{n}(K')=
\begin{cases}
J_{1}(K)+ak & (n=1), \\
J_{-1}(K)+ak & (n=-1), \\
J_{n}(K) & (n\ne 0,\pm1). 
\end{cases}
\]
By Theorem~\ref{thm-lowerbound}, we have 
\[
\mathrm{d}_{2k}(K,K')\ge \frac{1}{k}|J_{1}(K)-J_{1}(K')|
=\frac{1}{k}|-ak|=a, 
\]
which shows $\mathrm{d}_{2k}(K,K')=a$. 
\end{proof}

We remark that for any positive integer $a$, 
there is a family of infinitely many virtual knots $\{K_{s}\}_{s=1}^{\infty}$ such that $\mathrm{u}_{2k}(K_{s})=a$ $(s\geq1)$. 
In fact, let $K_{s}$ be the virtual knot represented by the diagram in \cite[Figure~2.6]{NNSW} with $m=ak$. 
Then by Theorem~\ref{thm-lowerbound}, it can be seen that these virtual knots $K_{s}$'s form such a family.


\end{document}